\documentclass[12pt]{amsart}
\usepackage{amssymb,amsmath,amsthm,bm}
\oddsidemargin=-.0cm
\evensidemargin=-.0cm
\textwidth=16cm
\textheight=22cm
\topmargin=0cm
%
\usepackage[colorlinks=true,allcolors=blue]{hyperref}
\usepackage[charter,cal=cmcal]{mathdesign}
\DeclareSymbolFont{usualmathcal}{OMS}{cmsy}{m}{n}
\DeclareSymbolFontAlphabet{\mathcal}{usualmathcal}

\usepackage[T1]{fontenc}

\usepackage{color}


\def\cic{\bm}

\def\d{{\rm d}}

\def\R {\mathbb{R}}

\def\G{{\mathcal G}}

\def\D {{\mathcal D}}

\def \l {\langle}
\def \r {\rangle}

\def \and{\qquad\text{and}\qquad}

\newcommand{\supp}{\mathrm{supp}\,}

\newcounter{thms}
\newcounter{other}
\numberwithin{other}{section}
\newtheorem{proposition}[other]{Proposition}
\newtheorem{theorem}[thms]{Theorem}
\newtheorem*{theorem*}{Theorem}
\newtheorem*{proposition*}{Proposition}
\newtheorem{corollary}[thms]{Corollary}
\newtheorem{lemma}[other]{Lemma}
\theoremstyle{definition}
\newtheorem{remark}[other]{Remark}

\numberwithin{equation}{section}

\title[Sparse domination via dyadic shifts]{Uniform   sparse domination of singular integrals \\  via dyadic shifts}

 \author{Amalia Culiuc}
 \address{\noindent School of Mathematics, Georgia Institute of Technology, \newline \indent Atlanta, GA 30332, USA (A.\ Culiuc)}
 \author{Francesco Di Plinio} \address{\noindent Department of Mathematics, University of Virginia,  \newline \indent Kerchof Hall,  Box 400137, Charlottesville, VA 22904-4137, USA  (F.\ Di Plinio)}
 \author{Yumeng Ou}
 \address{\noindent Department of Mathematics, Massachusetts Institute of Technology, \newline \indent  77 Massachusetts Avenue, Cambridge, MA 02139, USA (Y. Ou)\vskip2mm }
\email[A.\ Culiuc]{amalia@math.gatech.edu}
\email[F.\ Di Plinio]{francesco.diplinio@virginia.edu}
\email[Y.\ Ou]{yumengou@mit.edu}

%
 
 \subjclass[2010]{Primary: 42B20. Secondary: 42B25}
 \keywords{Positive sparse operators,   $T(1)$ theorems, weighted norm inequalities, matrix weights}
\thanks{F. Di Plinio was partially
supported by the National Science Foundation under the grants
   NSF-DMS-1500449 and  NSF-DMS-1650810.}

\begin{document}
 \begin{abstract} Using the Calder\'on-Zygmund decomposition, we give a novel and  simple proof that $L^2$ bound\-ed dyadic shifts admit a domination by positive sparse forms with linear growth in  the complexity of the shift. Our estimate, coupled with Hyt\"onen's dyadic representation theorem, upgrades to a positive sparse domination of the class $\mathcal U$ of singular integrals satisfying the assumptions of the classical $T(1)$-theorem of David and Journ\'e.
 Furthermore, our  proof extends rather easily to the $\R^n$-valued case, yielding as a corollary  the operator norm bound on the matrix weighted  space $L^2(W; \R^n)$
 \[
 \left\|T\otimes \mathrm{Id}_{\R^n}\right\|_{L^2(W; \R^n)\rightarrow L^2(W; \R^n)} \lesssim [W]_{A_2}^{\frac32}
 \]
 uniformly over $T\in \mathcal U$, which is the currently best known dependence.
  \end{abstract}
\maketitle
\section{Main results and context} \label{s1}

Set in motion by the seminal article of Andrei Lerner \cite{Ler2013}, the pointwise control of singular integral operators by positive sparse averages of the input functions has proved to be a remarkably effective strategy towards sharp weighted norm inequalities, within and beyond Calder\'on-Zygmund theory.

In this note, we set forth a novel and simple approach to positive sparse domination of singular integral operators, at the core of which lies the classical Calder\'on-Zygmund decomposition. Our approach has the advantage of extending rather effortlessly to the case of singular integrals acting on $\R^n$-valued functions, thus yielding matrix weighted norm inequalities with quantified dependence on the matrix weight characteristic. We   provide additional context after the statement of our main results.
 
\subsection{Main results}Our first domination result, Theorem  \ref{domshift}, involves  dyadic shifts, which are the fundamental discrete model for  Calder\'on-Zygmund operators. We send to   Hyt\"onen \cite{HytA2} and references therein for more precise versions of this statement, and proceed to the formal  definition.   Let $\mathcal D$ be a dyadic lattice on $\R^d$, and $m_1,m_2$ be two nonnegative integers. A bilinear form
\[
\mathbb S^\varrho(f_1,f_2) = \sum_{Q \in \mathcal D} S_Q(f_1,f_2), \qquad  S_Q(f_1,f_2):=\int_{Q\times Q} s_Q(x_1,x_2) f_1(x_1)  {f_2(x_2)} \, \d x_1 \d x_2
\]
defined for $f_j\in L^1_{\mathrm{loc}} (\R^d)$, $j=1,2$ is termed a \emph{dyadic shift} of complexity $\varrho=\max\{1,m_1,m_2\}$ constructed on $\mathcal D$ if the following assumptions hold:
\begin{itemize}
\item[A1.] $s_Q: Q\times Q\to \mathbb C$ is  zero for all but finitely many $Q\in \mathcal D$ and $\|s_Q\|_\infty \leq |Q|^{-1}$.
\item[A2.] 
there holds \[\sup_{\mathcal Q\subset \mathcal D}|\mathbb S^\varrho_{\mathcal Q}(f_1,f_2)| \leq \|f_1\|_2\|f_2\|_2\] 
where the \emph{subshifts}  $\mathbb S^\varrho_{\mathcal Q}$ are defined by
\[
\mathbb S^\varrho_{\mathcal Q}(f_1,f_2): = \sum_{Q \in \mathcal Q}  S_Q(f_1,f_2).
\]
\item[A3.] if $R\in \mathcal D, R\subset Q $ and $\ell(R) <2^{-m_1}\ell(Q)$ 
then $ s_Q(\cdot,x_2)$ is constant on $R$ for all $x_2 \in Q$, and symmetric assumption with the roles of $x_1,x_2$ interchanged.\end{itemize}
We now introduce the ingredients of a positive sparse form.  For a cube $Q\subset \R^d$, we write
\[
\l f \r_Q := \frac{1}{|Q|} \int_Q |f| \, \d x.
\]
We say that a collection $  \mathcal S$ of cubes of $\R^d$  is $\eta$-sparse if for each $Q\in \mathcal S$ there exists $E_Q\subset Q$ with $|E_Q|\geq \eta |Q|$ and such that the sets $\{E_Q: Q \in \mathcal S\}$ are pairwise disjoint. The precise value of $\eta<1$ will be of no interest for us in what follows.
\begin{theorem} \label{domshift} There exists an absolute constant $C$ such that the following holds. For every $f_1,f_2 \in L^1(\R^d)$  with compact support and every dyadic lattice $\mathcal D$  there exists a sparse collection $\mathcal S_\D$ such that for all $\varrho\geq 1$\begin{equation}
\label{unifest}
\sup_{   \mathbb S^\varrho} |\mathbb S^\varrho(f_1,f_2)| \leq C \varrho    \sum_{Q \in \mathcal S_\D} |Q| \l f_1 \r_{Q} \l f_2 \r_{Q}  
\end{equation}
the supremum being taken  over all  {dyadic shifts} $ \mathbb S^\varrho  $ of complexity $\varrho$ constructed on $\mathcal D$.
\end{theorem}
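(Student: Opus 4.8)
The plan is to run a single self-improving recursion on cubes in which the Calder\'on--Zygmund decomposition of $f_1,f_2$ simultaneously produces the next generation of the sparse family and the good/bad splitting used to estimate one scale. After a harmless reduction we assume $\supp f_1\cup\supp f_2\subseteq Q_0$ for one $Q_0\in\mathcal D$; the portion $\sum_{Q\supsetneq Q_0}S_Q(f_1,f_2)$ of the shift is handled crudely, since on the $k$-th ancestor of $Q_0$ one has $|S_Q(f_1,f_2)|\le\|s_Q\|_\infty\|f_1\|_1\|f_2\|_1\le 2^{-kd}|Q_0|\langle f_1\rangle_{Q_0}\langle f_2\rangle_{Q_0}$ by A1, and summing the geometric series folds this into the $Q_0$-term.

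To build $\mathcal S_\mathcal D$, set $\mathcal S_0=\{Q_0\}$, and given $Q\in\mathcal S_j$ let the stopping cubes below $Q$ be the maximal $P\in\mathcal D$, $P\subsetneq Q$, with $\langle f_1\rangle_P>4\langle f_1\rangle_Q$ or $\langle f_2\rangle_P>4\langle f_2\rangle_Q$; put $\mathcal S_{j+1}$ equal to the union of these over $Q\in\mathcal S_j$ and $\mathcal S_\mathcal D=\bigcup_j\mathcal S_j$. The weak-type $(1,1)$ inequality for the dyadic maximal operator gives $\sum_P|P|\le\frac12|Q|$ for the stopping cubes below $Q$, so the sets $E_Q:=Q\setminus\bigcup_P P$ show $\mathcal S_\mathcal D$ is $\frac12$-sparse; note $\mathcal S_\mathcal D$ depends only on $f_1,f_2,\mathcal D$, not on $\varrho$ or the shift. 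Assigning to each $R\in\mathcal D$ with $R\subseteq Q_0$ the minimal sparse cube $\pi(R)\in\mathcal S_\mathcal D$ containing it, and writing $\mathcal Q(Q)=\{R:\pi(R)=Q\}\subseteq\mathcal D$, one has $\sum_{Q\subseteq Q_0}S_Q=\sum_{Q\in\mathcal S_\mathcal D}\mathbb S^\varrho_{\mathcal Q(Q)}(f_1,f_2)$, so the theorem reduces to the single-scale bound
\[
\bigl|\mathbb S^\varrho_{\mathcal Q(Q)}(f_1,f_2)\bigr|\le C\varrho\,|Q|\langle f_1\rangle_Q\langle f_2\rangle_Q,\qquad Q\in\mathcal S_\mathcal D,
\]
which is then summed over $Q\in\mathcal S_\mathcal D$. (One checks routinely that every $R\in\mathcal Q(Q)$ lies in $Q$, is ``good'' in the sense $\langle f_i\rangle_R\le 4\langle f_i\rangle_Q$, and is contained in no stopping cube below $Q$.)

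For the single-scale estimate, since every $R\in\mathcal Q(Q)$ lies in $Q$ we may replace $f_i$ by $f_i\ind_Q$ and run the Calder\'on--Zygmund decomposition $f_i\ind_Q=g_i+b_i$ attached to the stopping cubes $P$ below $Q$: $b_i=\sum_P b_i^P$ with $\supp b_i^P\subseteq P$, $\int b_i^P=0$ and $\|b_i^P\|_1\le 2\int_P|f_i|$, while $\|g_i\|_\infty\lesssim\langle f_i\rangle_Q$ (using the a.e.\ bound $|f_i|\lesssim\langle f_i\rangle_Q$ on $E_Q$) and $\|g_i\|_1\le|Q|\langle f_i\rangle_Q$, hence $\|g_i\|_2\lesssim|Q|^{1/2}\langle f_i\rangle_Q$. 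Expand into four terms. The $(g_1,g_2)$-term is $\le\|g_1\|_2\|g_2\|_2\lesssim|Q|\langle f_1\rangle_Q\langle f_2\rangle_Q$ by the $L^2$-bound A2 applied to the subshift $\mathbb S^\varrho_{\mathcal Q(Q)}$. For the mixed term $(g_1,b_2)$, assumption A3 together with $\int b_i^P=0$ forces $S_R(g_1,b_2^P)=0$ unless $P\subsetneq R$ with $\ell(P)\ge 2^{-m_2}\ell(R)$, i.e.\ $R$ is one of at most $m_2$ ancestors of $P$; for those few $R$, $|S_R(g_1,b_2^P)|\le|R|^{-1}\|g_1\ind_R\|_1\|b_2^P\|_1\lesssim\langle f_1\rangle_Q\int_P|f_2|$, and summing over $R$ and then over $P$ (with $\sum_P\int_P|f_2|\le|Q|\langle f_2\rangle_Q$) gives $\lesssim\varrho|Q|\langle f_1\rangle_Q\langle f_2\rangle_Q$; the $(b_1,g_2)$-term is symmetric. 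For the bad-bad term, A3 used once in each variable gives $S_R(b_1,b_2)=\int_{R\times R}s_R\,B_1^R\,B_2^R$ with $B_i^R:=\sum b_i^P$ over stopping cubes $P\subsetneq R$ satisfying $\ell(P)\ge 2^{-m_i}\ell(R)$; then crudely $|S_R(b_1,b_2)|\le|R|^{-1}\|B_1^R\|_1\|B_2^R\|_1$, where $\|B_2^R\|_1\le 2\int_R|f_2|\lesssim|R|\langle f_2\rangle_Q$ by disjointness of the stopping cubes, and $\sum_{R\in\mathcal Q(Q)}\|B_1^R\|_1$, after swapping summation, charges each stopping cube $P$ to at most $m_1$ ancestors $R$, yielding $\lesssim\varrho\,\langle f_2\rangle_Q\sum_P\int_P|f_1|\le\varrho|Q|\langle f_1\rangle_Q\langle f_2\rangle_Q$.

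I expect the main obstacle to be exactly the bad-bad term: estimated naively by $\sum_{R\in\mathcal Q(Q)}\|s_R\|_\infty\|b_1\ind_R\|_1\|b_2\ind_R\|_1$ the sum over $R$ diverges, because there can be arbitrarily many dyadic scales between $Q$ and its stopping cubes; it is the double use of the constancy hypothesis A3 --- once in $x_1$, once in $x_2$ --- that collapses each $S_R(b_1,b_2)$ to a contribution of only $O(\varrho)$ generations of stopping cubes, and this is precisely the mechanism producing the claimed linear growth in the complexity. The remaining ingredients (the single-cube reduction, the Lebesgue-point bound on $E_Q$, and the elementary combinatorics of $\mathcal Q(Q)$) are routine.
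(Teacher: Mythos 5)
Your single-scale estimate and the overall architecture (stopping-time sparse family, assign each $R\in\mathcal D$ to $\pi(R)$, run the Calder\'on--Zygmund decomposition on $f_i\cic1_Q$, good--good via $L^2$, mixed via cancellation of $b_i^P$ against the $m_i$-constancy of $s_R$, bad--bad via cancellation in each variable) is essentially the paper's proof. Your bookkeeping of the complexity factor is a cosmetic variant: where you charge each stopping cube $P$ to its $\le m_i\le\varrho$ nearest ancestors directly, the paper instead splits the good collection into $\varrho$ subfamilies whose sidelengths lie in a fixed residue class mod $\varrho$, making the relevant ancestor $R(I)$ \emph{unique}; the two accountings yield the same linear factor. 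Your concluding remark correctly identifies the bad--bad term and the double use of A3 as the crux.

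The genuine gap is the opening reduction: \emph{``we assume $\supp f_1\cup\supp f_2\subseteq Q_0$ for one $Q_0\in\mathcal D$''} is not a harmless reduction, and in general it is simply false. For the standard dyadic lattice on $\R^d$, no dyadic cube contains a neighborhood of the origin, so $f_1=f_2=\cic1_{[-1,1]^d}$ already defeats it. This is exactly why the paper splits the support over $2^d$ adjacent congruent cubes $Q_1,\dots,Q_{2^d}\in\mathcal D$ and then must separately control the off-diagonal terms $\mathbb S^\varrho(f_1\cic1_{Q_k},f_2\cic1_{Q_\ell})$, $k\ne\ell$. Handling those cross terms is \emph{not} free: one uses the $m_i$-constancy of $s_R$ at scales $\ell(R)\ge 2^\varrho\ell(Q_k)$ to replace $f_i\cic1_{Q_i}$ by its average and then invoke the $L^2$ bound A2 on the corresponding subshift, while the remaining $\le\varrho$ intermediate scales are bounded crudely by A1. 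In other words, the cross-term estimate re-uses both A2 on constants and the complexity count, and is a real (if short) piece of the argument rather than something that folds trivially into a geometric tail as in your $Q\supsetneq Q_0$ sum. If you replace the ``harmless reduction'' by the $2^d$-cube decomposition and supply this cross-term estimate (the union $\overline Q$ of the $Q_\ell$ is then placed in the sparse family in place of the $Q_\ell$ themselves, to absorb it), your argument is complete.

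A very minor second point: your $\|B_2^R\|_1\lesssim|R|\langle f_2\rangle_Q$ implicitly uses that every $R\in\mathcal Q(Q)$ satisfies $\langle f_2\rangle_R\le 4\langle f_2\rangle_Q$. This is correct because any $R$ with $\langle f_2\rangle_R>4\langle f_2\rangle_Q$ lies inside a stopping cube (maximality) and therefore has $\pi(R)\ne Q$, but it is worth stating, as it is the precise point at which the choice of stopping family feeds back into the CZ estimates.
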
 The proof   is given in Section \ref{spf}.
We may upgrade the   sparse domination of Theorem \ref{domshift}  to a domination of singular integrals satisfying smoothness assumptions on the kernel and David-Journ\'e type testing conditions, by using the well-known representation principle of Hyt\"onen \cite{HytA2}, built upon previous work of Nazarov, Treil and Volberg \cite{NTV03}. 

Let $\mathcal U$ be the family of singular integral operators, acting on a  dense subspace  $\mathcal W$ of $L^2(\R^d)$ containing, say, bounded functions with compact support,  and satisfying the following quantitative assumptions.
\begin{itemize}
\item[B1.]
Each $T\in \mathcal U$ has kernel representation
\[
Tf(x) = \int_{\R^d} K(x,y) f(y) \, \d y, \qquad x\not \in \supp f
\]
with $K: \R^d\times \R^d\backslash\{(x,y):x=y\}\to \mathbb C$ satisfying  the standard estimates
\[
\begin{split}
&|K(x,y)| \leq \frac{1}{|x-y|^d},\\
& |K(x+h,y)-K(x,y)|+|K(x,y+h)-K(x,y)| \leq \frac{\omega\left(\textstyle\frac{|h|}{|x-y|} \right)}{|x-y|^d}  \qquad \forall |h|<\frac{|x-y|}{2}
\end{split}
\]
where $\omega$ is the modulus of continuity $\omega(t)=t^\alpha$, $t\in (0,1]$, and $\alpha\in (0,1]$ is fixed. 
\item[B2.] There holds
\[
\sup_{Q} |\l T(\cic{1}_Q),\cic{1}_Q \r|   \leq |Q|
\] 
the supremum being taken over cubes $Q\subset \R^d$.
\item[B3.] With an appropriate definition of $T\cic{1}, T^*\cic{1}$, there holds
\[
\|T\cic{1}\|_{\mathrm{BMO}(\R^d)}, \,\|T^*\cic{1}\|_{\mathrm{BMO}(\R^d)} \leq 1.
\]
\end{itemize}
The following proposition is a restatement of the representation theorem from \cite{HytA2}, in the more precise version provided in \cite{HytLN}.
\begin{proposition}{\cite[Theorem 3.3]{HytLN}} \label{rt} Let $f_1,f_2\in \mathcal W$. There holds 
\begin{equation} \label{propeq}
\sup_{T\in \mathcal U}| \l T f_1, \overline {f_2}\r |\leq C \sup_{\varrho \geq 1} \sup_{\mathcal D,\mathbb S^\varrho} \varrho^{-1}  |\mathbb S^\varrho(f_1,f_2)| 
\end{equation}
with a constant $C>0$ depending on $d, \alpha$ only, the second supremum being taken over all dyadic lattices $\mathcal D$ of $\R^d$ and all  {dyadic shifts} $ \mathbb S^\varrho  $ of complexity $\varrho$ constructed on $\mathcal D$.
\end{proposition}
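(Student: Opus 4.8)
The plan is to deduce this statement directly from Hyt\"onen's dyadic representation theorem in the precise, constant-tracking form of \cite[Theorem 3.3]{HytLN}, and to convert the resulting average of shifts into the right-hand side of \eqref{propeq} by a one-line summation in the complexity parameter. I would begin by recording the output of the representation theorem applied to $T\in\mathcal U$ and $f_1,f_2\in\mathcal W$: there is an identity
\[
\l Tf_1,\overline{f_2}\r \;=\; C_{d,\alpha}\,\E_{\mathcal D}\sum_{m,n\ge 0}\kappa_{m,n}\,\mathbb S^{(m,n)}_{\mathcal D}(f_1,f_2),\qquad \kappa_{m,n}\le 2^{-\frac{\alpha}{2}\max\{m,n\}},
\]
where $\E_{\mathcal D}$ is the expectation over the usual random choice of dyadic lattice on $\R^d$, and, for each realization $\mathcal D$ and each pair $(m,n)$, the bilinear form $\mathbb S^{(m,n)}_{\mathcal D}$ is a dyadic shift constructed on $\mathcal D$ of complexity $\varrho_{m,n}:=\max\{1,m,n\}$ in the sense of A1--A3. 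The one point needing attention is the normalization: in \cite[Theorem 3.3]{HytLN} the $L^2\to L^2$ bounds of the shifts $\mathbb S^{(m,n)}_{\mathcal D}$ and of their subshifts are controlled by the Calder\'on--Zygmund constant of $T$, which the normalized hypotheses B1--B3 force to be $O_{d,\alpha}(1)$; absorbing that factor into $C_{d,\alpha}$ gives precisely assumption A2. The paraproduct contributions produced by $T\cic{1}$ and $T^*\cic{1}$ are, in this formulation, already packaged into the complexity--$1$ shifts by means of the $\mathrm{BMO}$ bound B3, so that no separate term survives, and the vanishing-of-Haar-coefficients property in \cite{HytLN} ($s_Q$ constant on subcubes of $Q$ of relative generation exceeding $m$, respectively $n$) is exactly A3 with $m_1=m$ and $m_2=n$.

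With the identity in hand, the remainder is bookkeeping. I would take absolute values, apply the triangle inequality termwise inside the sum and linearity of $\E_{\mathcal D}$, and note that for each realization $\mathcal D$ and each $(m,n)$ the form $\mathbb S^{(m,n)}_{\mathcal D}$ is one of the dyadic shifts of complexity $\varrho_{m,n}$ on $\mathcal D$ over which the supremum on the right-hand side of \eqref{propeq} ranges. Hence, writing
\[
\mathcal M:=\sup_{\varrho\ge1}\ \sup_{\mathcal D,\,\mathbb S^\varrho}\ \varrho^{-1}\bigl|\mathbb S^\varrho(f_1,f_2)\bigr|,
\]
we have $\bigl|\mathbb S^{(m,n)}_{\mathcal D}(f_1,f_2)\bigr|=\varrho_{m,n}\cdot \varrho_{m,n}^{-1}\bigl|\mathbb S^{(m,n)}_{\mathcal D}(f_1,f_2)\bigr|\le \varrho_{m,n}\,\mathcal M$, and therefore
\[
\bigl|\l Tf_1,\overline{f_2}\r\bigr|\ \le\ C_{d,\alpha}\,\mathcal M\ \E_{\mathcal D}\sum_{m,n\ge0}\max\{1,m,n\}\,2^{-\frac{\alpha}{2}\max\{m,n\}}\ =\ C'_{d,\alpha}\,\mathcal M,
\]
the last series being convergent with value depending only on $d$ and $\alpha$ (grouping by $v=\max\{m,n\}$ gives $\sum_{v\ge0}(2v+1)\max\{1,v\}2^{-\alpha v/2}<\infty$, and the expectation of a constant equals that constant). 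Taking the supremum over $T\in\mathcal U$ on the left is exactly \eqref{propeq}.

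There is essentially no obstacle internal to this argument: the genuine difficulty lies entirely in the cited representation theorem, namely in producing the identity above with shifts that obey A1--A3 \emph{and} whose subshifts are $L^2$-bounded with constant controlled by the $T(1)$ data B1--B3 --- which is why we invoke the sharpened statement \cite[Theorem 3.3]{HytLN} rather than the original \cite{HytA2}. Once that is granted, the passage to \eqref{propeq} is only the averaging computation above; the role of the weight $\varrho^{-1}$ on the right of \eqref{propeq} is simply to leave room for the linear-in-$\varrho$ loss that Theorem \ref{domshift} will incur, a loss that the geometric factor $2^{-\alpha\varrho/2}$ in the representation comfortably absorbs.
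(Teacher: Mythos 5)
Your argument is the same as the paper's: the paper proves Proposition \ref{rt} in the remark immediately following it, by quoting the representation identity \eqref{hyteq} from \cite{HytLN} with coefficients bounded by $C2^{-\frac{\alpha}{2}\varrho}$, bounding each shift by $\varrho$ times the supremum on the right of \eqref{propeq}, and summing the convergent series $\sum_\varrho \varrho\,2^{-\frac{\alpha}{2}\varrho}$. Your bookkeeping with the double index $(m,n)$ and the remarks on normalization and on the paraproducts being absorbed into the shift class are consistent with this, so the proposal is correct and essentially identical in approach.
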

\begin{remark} In \cite{HytLN,HytA2}, following ideas of \cite{NTV03}, the author constructs  a family  $\mathcal D_\omega$ of dyadic lattices parametrized by $\omega\in\Omega=(\{0,1\}^d)^{\mathbb Z}$. Then, Theorem 3.3 of \cite{HytLN} rewritten in our language yields that for each $T \in \mathcal U$, $f_1,f_2 \in \mathcal W$, the equality  
\begin{equation} \label{hyteq}
\l Tf_1,\overline{f_2}\r= \mathbb{E}_{\omega} \sum_{\varrho=1}^\infty \tau(\varrho)   \mathbb{S}^{\varrho}_{\omega}(f_1,f_2)
\end{equation}  holds with a suitable choice  of  $\mathbb{S}^{\varrho}_{\omega}$,  a dyadic shift of complexity $\varrho$ constructed on the dyadic lattice $\mathcal D_\omega$, and with a  
sequence
$\{\tau(\varrho)\}$ satisfying \[|\tau(\varrho)| \leq C 2^{-\frac{\alpha}{2} \varrho},\]
the expectation in \eqref{hyteq} being taken over the natural probability measure on $ \Omega.$
The uniform estimate of Proposition \ref{rt} thus follows by dominating the right hand side of \eqref{hyteq} by the right hand side of \eqref{propeq} times the series of   $\varrho2^{-\frac{\alpha}{2} \varrho}$, and summing the series.
\end{remark}
\begin{remark}
We note that more refined versions of the representation formula of \cite{HytLN} may be employed to extend \eqref{propeq} to logarithmic-type moduli of continuity $\omega$; see  for instance the very recent article \cite{GH}. However, these methods fall short of tackling the Dini-continuous case first settled in \cite{Lac2015}. For this reason, and given that the main aim of this paper is to present a new sparse domination technique in the simplest possible setting, we choose to restrict our analysis to power-type moduli of continuity.  
\end{remark}
Coupling the domination Theorem \ref{domshift} with Proposition \ref{rt} yields the following   sparse domination theorem.
\begin{theorem} \label{mainthm}  There exists an absolute constant $C>0$ such that the following holds. For every $f_1,f_2 \in \mathcal W$ and having compact support  there exists a sparse collection   $\mathcal S$ such that
\[
\sup_{T\in \mathcal U} |  \l T f_1, \overline {f_2} \r| \leq C  \sum_{Q \in \mathcal S} |Q| \l f_1 \r_{Q} \l f_2 \r_{Q}.  
\]
\end{theorem}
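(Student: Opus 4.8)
The plan is to combine the two results already in hand: the uniform sparse domination of dyadic shifts (Theorem \ref{domshift}) and the Hyt\"onen-type representation principle (Proposition \ref{rt}). The only subtlety is that Theorem \ref{domshift} produces, for each pair $f_1,f_2$ and each dyadic lattice $\mathcal D$, a sparse collection $\mathcal S_\D$ depending on $\D$, whereas the representation in \eqref{propeq} ranges over all dyadic lattices $\mathcal D$ simultaneously. Since there are uncountably many lattices in the family $\mathcal D_\omega$, we cannot naively take a union of the $\mathcal S_\D$; the bulk of the work is to see that this is not actually needed, because the representation theorem in the form of \eqref{hyteq} only involves the countable (indeed, finite, after truncation) reservoir of lattices that matters for fixed compactly supported $f_1,f_2$.

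First I would fix $f_1,f_2\in\mathcal W$ with compact support and apply Proposition \ref{rt}, so that
\[
\sup_{T\in\mathcal U}|\l Tf_1,\overline{f_2}\r| \leq C \sup_{\varrho\geq 1}\sup_{\mathcal D,\mathbb S^\varrho}\varrho^{-1}|\mathbb S^\varrho(f_1,f_2)|.
\]
For each dyadic lattice $\mathcal D$ appearing here, Theorem \ref{domshift} furnishes a sparse collection $\mathcal S_\D$ such that $\varrho^{-1}|\mathbb S^\varrho(f_1,f_2)|\leq C\sum_{Q\in\mathcal S_\D}|Q|\l f_1\r_Q\l f_2\r_Q$ for every complexity $\varrho$ and every shift $\mathbb S^\varrho$ on $\D$. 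The right-hand side is now independent of $\varrho$ and of the particular shift. Taking the supremum over $\varrho$ and $\mathbb S^\varrho$ on a fixed $\D$, and then recording that the outer supremum over $\D$ is really a supremum over a single sparse-bound expression indexed by $\D$, yields
\[
\sup_{T\in\mathcal U}|\l Tf_1,\overline{f_2}\r| \leq C \sup_{\mathcal D}\ \sum_{Q\in\mathcal S_\D}|Q|\l f_1\r_Q\l f_2\r_Q.
\]
It remains to replace $\sup_{\mathcal D}$ by a single sparse collection $\mathcal S$; one may simply note that the supremum over $\D$ is attained (or approached within a factor $2$) by some lattice $\D_0$, since for compactly supported $f_1,f_2$ only shifts with finitely many nonzero $s_Q$ and cubes of controlled size enter, and set $\mathcal S:=\mathcal S_{\D_0}$.

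Alternatively, and more transparently, one can sidestep the supremum entirely by working from the averaged identity \eqref{hyteq}: for fixed $\omega$ the sum $\sum_\varrho\tau(\varrho)\mathbb S^\varrho_\omega(f_1,f_2)$ is dominated, via Theorem \ref{domshift} and $|\tau(\varrho)|\leq C2^{-\alpha\varrho/2}$, by $C\big(\sum_\varrho \varrho 2^{-\alpha\varrho/2}\big)\sum_{Q\in\mathcal S_{\D_\omega}}|Q|\l f_1\r_Q\l f_2\r_Q$, and the geometric-type series converges to an absolute constant. Taking expectation over $\omega\in\Omega$, choosing an $\omega_0$ for which the bound does not exceed its average, and letting $\mathcal S:=\mathcal S_{\D_{\omega_0}}$ gives exactly the claimed inequality with an absolute constant $C$. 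I expect the main (and only) obstacle to be this bookkeeping step of passing from a $\D$-dependent sparse family to a single one; everything else is a direct quotation of Theorem \ref{domshift} and Proposition \ref{rt}. Finally, since $\mathcal S$ is $\eta$-sparse for the absolute $\eta$ of Theorem \ref{domshift}, the conclusion is in the required form.
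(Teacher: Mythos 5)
Your overall plan — combine Theorem \ref{domshift} with Proposition \ref{rt} — is exactly the paper's, and you correctly flag the one real subtlety: passing from a $\D$-dependent family of sparse collections to a single one. Where you diverge from the paper is in how this last step is executed. The paper bounds $\sup_{T}|\l Tf_1,\overline{f_2}\r|$ by $C\sup_{\mathcal T}\sum_{Q\in\mathcal T}|Q|\l f_1\r_Q\l f_2\r_Q$ taken over \emph{all} sparse collections $\mathcal T$, and then cites \cite[Lemma 4.7]{LMena}: a stopping-time construction from the level sets of $Q\mapsto\l f_1\r_Q\l f_2\r_Q$ yields a single sparse $\mathcal S$ realizing this supremum up to a constant. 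Your second route — insert Theorem \ref{domshift} into the averaged identity \eqref{hyteq}, sum $\sum_\varrho\varrho 2^{-\alpha\varrho/2}$, take expectation in $\omega$, and select $\omega_0$ with $\sum_{Q\in\mathcal S_{\D_{\omega_0}}}|Q|\l f_1\r_Q\l f_2\r_Q$ at least its average — is a correct and arguably more self-contained alternative: since Theorem \ref{domshift} produces one $\mathcal S_{\D_\omega}$ working for all shifts on $\D_\omega$ simultaneously, the bound $\mathbb E_\omega\big[\sum_{Q\in\mathcal S_{\D_\omega}}|Q|\l f_1\r_Q\l f_2\r_Q\big]$ is $T$-independent, so the sup over $T\in\mathcal U$ passes through before you pick $\omega_0$. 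The difference is cosmetic in the end: the paper's $\mathcal S$ comes from level sets of the product of averages, while yours is $\mathcal S_{\D_{\omega_0}}$ for one lattice. Your first route (choose $\D_0$ nearly realizing $\sup_\D$) is also fine but as written leaves a gap: you should state why $\sup_{\D}\sum_{Q\in\mathcal S_\D}|Q|\l f_1\r_Q\l f_2\r_Q<\infty$ — this follows from the universal $L^2\times L^2$ bound for sparse forms and $f_1,f_2\in L^2$ — before appealing to near-attainment.
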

\begin{proof}
Fix a pair of functions $f_1,f_2 \in \mathcal W$  with compact support. A combination of Theorem \ref{domshift} with Proposition \ref{rt} readily yields the inequality
\[
\sup_{T\in \mathcal U} |  \l T f_1, \overline {f_2} \r| \leq  C  \sup_{\mathcal T} \sum_{Q \in \mathcal T} |Q| \l f_1 \r_{Q} \l f_2 \r_{Q},\]
the supremum being taken over \emph{all} sparse collections  $\mathcal T$. 
 The proof of Theorem \ref{mainthm} is then finished by the observation that there exists a sparse collection $\mathcal S$ (depending only on $f_1,f_2$) such that
\[
\sup_{\mathcal T} \sum_{Q \in \mathcal T} |Q| \l f_1 \r_{Q} \l f_2 \r_{Q} \leq  C\sum_{Q \in \mathcal S} |Q| \l f_1 \r_{Q} \l f_2 \r_{Q} ,
\]
The last claim follows via a simple stopping time argument based on the size of $\l f_1 \r_{Q} \l f_2 \r_{Q}$; we send to  \cite[Lemma 4.7]{LMena} for the full proof.
\end{proof}
Our proof of the dyadic shift domination Theorem \ref{domshift}   is based on a stopping time argument akin to the one employed by the authors in \cite{CuDPOu} to prove a uniform sparse domination theorem for  the bilinear Hilbert transforms. At the heart of both lies a Calder\'on-Zygmund decomposition: classical, in Lemma \ref{mainiter} of this paper, around multiple frequencies in the outer $L^p$-embedding theorem \cite{DPOu2} by two of us which is relied upon in  \cite{CuDPOu}.
 
{{}{Perhaps surprisingly, our approach  extends effortlessly to singular integrals acting on functions taking values in a finite-dimensional Euclidean space, once a suitable vector valued version of the  Calder\'on-Zygmund decomposition is introduced with Lemma \ref{vectorstop}. In Section \ref{s3}, we adapt our proof of Theorem \ref{domshift} to obtain uniform positive sparse domination of singular integrals in the class $\mathcal U$: see Theorem \ref{mainthmvec}. Besides its intrinsic interest, Theorem \ref{mainthmvec}  also yields the currently best known quantitative matrix $A_2$ weighted estimates for the $\R^n$-valued extension of operators of the class $\mathcal U$: see Corollary \ref{corweight}. Our positive sparse forms in this setting involve the Minkowski product of convex sets generated by local averages of the input functions, a variation on a theme proposed by  Nazarov, Petermichl, Treil and Volberg \cite{NTVc}. 
\subsection{Context}
We turn to a deeper description of the context and consequences of our approach. The pointwise control of a  Calder\'on-Zygmund operator $T$ by $2^d$ sparse averaging operators depending on $T$ itself and on the input function $f$, 
\begin{equation}
\label{pdom}
|Tf(x)| \leq C \sum_{1\leq j\leq 2^d} \sum_{Q \in \mathcal S_j}  \l f \r_{Q}\cic{1}_{Q}(x)
\end{equation}
has been first achieved independently by Conde-Alonso and Rey \cite{CR} and Lerner and Nazarov \cite{LerNaz2015}, elaborating on Lerner's seminal paper \cite{Ler2013}. A powerful approach to \eqref{pdom} forgoing the local mean oscillation estimate has been introduced by Lacey in \cite{Lac2015} and subsequently streamlined by Lerner \cite{Ler2015}. In contrast to all these previous works,    the weak $(1,1)$ estimate for $T$ is not an \emph{a priori} assumption of our Theorem  \ref{mainthm}. Rather, it is obtained as a consequence of the domination theorem from the standard assumptions of a $T(1)$ theorem. Furthermore, the sparse collection in Theorem \ref{mainthm} is explicitly constructed from level sets of the maximal function rather than the specific operator, and is thus the same for all operators in the class $\mathcal U$.}}

 We also note that   than the domination by sparse forms as in Theorem \ref{mainthm}, while formally weaker, seems to be just as useful as the pointwise control \eqref{pdom}. In fact, it is by dualizing \eqref{pdom} that the essential disjointness of $Q\in \mathcal S_j$ may be exploited. Hence, just as well as \eqref{pdom}, Theorem \ref{mainthm} leads rather immediately to  Hyt\"onen's sharp weighted inequalities \cite{HytA2}
\begin{equation}
\label{A2thm}
\sup_{T \in \mathcal U} \|T\|_{L^p(w)\rightarrow L^p(w)} \leq C\textstyle \frac{p}{p-1} [w]_{A_p}^{\max\{1,\frac{1}{p-1}\}}.
\end{equation}
See \cite{LiMoen2014} for a self-contained argument deducing \eqref{A2thm} from Theorem \ref{mainthm}. A more general treatment  is provided in \cite[Section 16]{LerNaz2015}. 
Comparing to the routes to the $A_2$-theorem   outlined in the interesting survey  \cite{HytSurv},  we believe that our approach provides an additional shortcut to a sharp weighted $T(1)$ theorem stemming directly from the representation theorem of \cite{HytA2}.  It is likely that our proof strategy may be  further applicable within the developing field of sparse domination in the nonhomogeneous and noncommutative setting: see \cite{CAP} for a recent breakthrough result.

{ Concerning the vector-valued extension, quantified matrix $A_2$ estimates have appeared in the recent works  \cite{BCTW,BPW,BW0}. A closely related result to Theorem \ref{mainthmvec}, involving the Minkowski sum of convex body-valued sparse operators rather than bilinear forms, was announced by Nazarov, Petermichl, Treil and Volberg \cite{NTVc} before the present article was prepared. The details of their argument were unknown to us at the time of completion of the first version of this article, and were made public in the preprint \cite{NPTV}, while our own article was being revised for publication.}

\subsection*{Acknowledgements} The concept of domination by convex body averages originates from an idea of Fedor Nazarov. The authors want to thank Sergei Treil for introducing this idea to them during his seminar talk at Brown University in the Spring of 2016. The authors also extend their gratitude to Michael Lacey, Jill Pipher and  Brett Wick for their comments on an early version of this manuscript, and to the anonymous referee for the valuable suggestions.

\section{Proof of Theorem \ref{domshift}} \label{spf}   Throughout this proof, we denote by $C$ a positive   constant which is allowed to depend on the dimension $d$ only and whose value may vary from line to line without explicit mention.

 \subsection{Construction of the sparse collection $\mathcal S_\D(f_1,f_2)$}Below, we write $\mathcal D(Q):=\{R\in \mathcal D: R\subset Q\}$.
For $f_1,f_2\in L^1_{\mathrm{loc}}(\R^d)$ and  $Q\in \mathcal D$, we define $I \in\mathcal I_Q$ to be the maximal elements of $\D(Q)$ such that
 \begin{equation}
\label{stop}
\l f_j \r_{I}> 2^{8} \l f_j \r_{Q}\qquad  \textrm{{for at least one }} j=1,2.
\end{equation}
Then
 \begin{equation}
\label{sparsepack}
 \sum_{I \in \mathcal I_Q} |I|\leq 2^{-7}|Q|.
\end{equation}
Now, if   $f_1,f_2\in L^1(\R^d)$  with compact support we may find $2^d$ adjacent congruent cubes $Q_1,\ldots Q_{2^d}\in \D$, whose union is $\overline Q$, such that ${\overline Q}$ contains the support of both $f_1,f_2$.
For each $\ell=1,\ldots,2^d$,  referring to the definition \eqref{stop},  we   inductively set
\[
\mathcal S_{\D,\ell,0}:=\{Q_\ell\}, \qquad \mathcal{S}_{\D,\ell,n}:= \bigcup_{Q \in \mathcal{S}_{\D,\ell,n-1}} \mathcal I_Q, \; n=1,2,\ldots \qquad  \mathcal{S}_{\D,\ell}:=\bigcup_{n\geq 1} \mathcal{S}_{\D,\ell,n}.
\]
By using the packing estimate \eqref{sparsepack} and disjointness of $Q_\ell,$ it is easy to see that
\[\mathcal S_{\D}:= \{\overline Q\} \cup \bigcup_{\ell=1}^{2^{d}} \mathcal{S}_{\D,\ell}\]
is a sparse  collection of dyadic cubes.
The reason for  employing  the larger $\overline Q$ in place of each $Q_\ell$ will be clear below.\subsection{Main line of proof of \eqref{unifest}}
The main  thrust of the proof    is provided by the lemma below, which we plan to apply iteratively.
\begin{lemma} \label{mainiter} Let $f_1,f_2\in L^1_{\mathrm{loc}}(\R^d)$ and  $Q\in \mathcal D$.  For all dyadic shifts  $\mathbb S^\varrho$ constructed on $\mathcal D$
\[
|\mathbb S^\varrho(f_1\cic{1}_Q,f_2\cic{1}_Q)| \leq C \varrho |Q| \l f_1 \r_Q \l f_2 \r_Q + \sum_{I \in \mathcal I_Q} \big|\mathbb S^\varrho_{\mathcal D(I)}(f_1\cic{1}_I ,f_2\cic{1}_I )\big| 
\]
\end{lemma}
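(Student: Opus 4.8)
The plan is to split the sum defining $\mathbb S^\varrho(f_1\cic{1}_Q, f_2\cic{1}_Q)$ over cubes $R\in\mathcal D$ according to how $R$ sits with respect to the stopping cubes $\mathcal I_Q$: either $R$ is contained in some $I\in\mathcal I_Q$, or it is not contained in any such $I$ (equivalently, $R$ meets the ``good'' region where the averages of $f_1,f_2$ are still comparable to their averages over $Q$). The contributions of the first type assemble, after a bookkeeping step, into the subshifts $\mathbb S^\varrho_{\mathcal D(I)}(f_1\cic{1}_I, f_2\cic{1}_I)$ appearing on the right-hand side, so the real work is to control the ``good'' part $\mathbb S^\varrho_{\mathcal G}(f_1\cic{1}_Q, f_2\cic{1}_Q)$ where $\mathcal G$ consists of those $R\in\mathcal D(Q)$ not strictly contained in any stopping cube. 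One has to be a little careful near the boundary: the cubes $R$ that are not inside any $I$ but that have a child inside one; here assumption A3 (the shift's kernel $s_R(\cdot,x_2)$ is constant on subcubes of $R$ of relative side-length $<2^{-m_1}\ell(R)$, and symmetrically) together with the complexity bound $\varrho$ will be what lets us absorb finitely many ($\sim\varrho$) scales of transition cleanly.

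The key steps, in order: (1) Write $f_j\cic{1}_Q = f_j\cic{1}_Q^{\mathrm{good}} + \sum_{I\in\mathcal I_Q} f_j\cic{1}_I$, where on the good part one has the pointwise-in-average bound $\l f_j\r_R \leq 2^8 \l f_j\r_Q$ for every $R$ not contained in a stopping cube, by maximality of the $\mathcal I_Q$. (2) Decompose $\mathbb S^\varrho = \sum_R S_R$ correspondingly and observe that the terms $S_R$ with $R\subset I$ for some $I\in\mathcal I_Q$, acting on the pieces supported in $I$, are exactly what rebuilds $\mathbb S^\varrho_{\mathcal D(I)}(f_1\cic{1}_I, f_2\cic{1}_I)$ — here the cancellation structure of a shift (each $S_R$ is localized to $R\times R$) is used so that pieces supported on different $I$'s do not interact and pieces supported on the good part do not contribute to these terms. (3) For the remaining ``good'' subshift, estimate it by A2 (the uniform $L^2$ bound for subshifts) applied not to $f_1\cic{1}_Q, f_2\cic{1}_Q$ directly — which would only give $|Q|^{1/2}\l f_1\r_Q^{1/2}\cdots$ type control, not good enough — but rather, scale by scale over the $O(\varrho)$ transitional scales, using A3 to replace $f_j$ by its average (a constant $\leq 2^8\l f_j\r_Q$ in absolute value) at each such scale, and using A1 ($\|s_R\|_\infty\leq |R|^{-1}$) plus the sparse packing estimate \eqref{sparsepack} to sum geometrically. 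This produces the bound $C\varrho |Q|\l f_1\r_Q\l f_2\r_Q$: one factor of $\varrho$ from summing over the transitional scales, and the product of averages from A3.

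The main obstacle I expect is step (3): naively the good subshift is an $L^2$-bounded object paired with functions whose $L^2$ norms are only controlled by $|Q|^{1/2}\l f_j\r_Q$, which by Cauchy–Schwarz gives $|Q|\l f_1\r_Q\l f_2\r_Q$ with no $\varrho$ — seemingly better! — but this is illusory because $f_j\cic{1}_Q^{\mathrm{good}}$ is not in $L^2$ with a controlled norm; one cannot simply truncate, since the stopping cubes carry most of the mass. So the correct route is to exploit A3 to trade the $L^2$ structure for an $L^\infty$/$L^1$ duality at each of the $\sim\varrho$ scales where $R\in\mathcal D(Q)$ but $R$ has descendants among the stopping cubes, which is precisely where the linear-in-$\varrho$ loss enters and is unavoidable. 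Making this scale-by-scale decomposition precise, and checking that A3 genuinely applies (the side-length gap is $2^{-m_1}$, $2^{-m_2}$, hence $\leq 2^{-\varrho}$ after at most $\varrho$ steps, with $\varrho = \max\{1,m_1,m_2\}$), is the technical heart of the argument; once that is in place, A1 and \eqref{sparsepack} close the estimate routinely.
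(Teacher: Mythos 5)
Your overall architecture (peel off the subshifts $\mathbb S^\varrho_{\mathcal D(I)}$ and control the remaining collection $\mathcal G$ of cubes not contained in $E=\bigcup_{I\in\mathcal I_Q}I$) matches the paper, but the core of your step (3) has a genuine gap, and your ``main obstacle'' diagnosis is wrong in a way that matters. Your step (1) decomposition $f_j\cic{1}_Q=f_j\cic{1}_{E^c}+\sum_I f_j\cic{1}_I$ omits the essential feature of the Calder\'on--Zygmund decomposition used in the paper: there one writes $f_j\cic{1}_Q=g_j+b_j$ with $g_j=f_j\cic{1}_{E^c}+\sum_I \l f_j\r_I\cic{1}_I$ and $b_{jI}=(f_j-\l f_j\r_I)\cic{1}_I$ of \emph{mean zero}. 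Without subtracting the averages, your pieces $f_j\cic{1}_I$ carry no cancellation, so your plan to ``use A3 to replace $f_j$ by its average'' must be carried out separately for each $R\in\mathcal G$ (which $I$'s the kernel $s_R$ is constant on depends on $R$); the substituted functions then depend on $R$, and assumption A2 --- a bound for a subshift applied to one fixed pair of functions --- can no longer be invoked to sum over the many scales in $\mathcal G$. The mean-zero structure is exactly what turns ``replace by the average where the kernel is constant'' into the identity $S_R(\cdot,b_{2I})=0$ for all $R$ with $\ell(R)\ge 2^{\varrho}\ell(I)$, yielding a single decomposition valid for all $R$ simultaneously.

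Moreover, your stated obstacle is misdiagnosed: $f_j\cic{1}_{Q\setminus E}$ \emph{is} bounded a.e.\ by $2^{8}\l f_j\r_Q$ (maximality of $\mathcal I_Q$ plus Lebesgue differentiation), hence lies in $L^2$ with norm $\lesssim |Q|^{1/2}\l f_j\r_Q$, and the same holds for the full good function $g_j$. The Cauchy--Schwarz/A2 bound $|Q|\l f_1\r_Q\l f_2\r_Q$ that you dismiss as ``illusory'' is precisely how the paper controls the bulk term $\mathbb S^\varrho_{\mathcal G'}(g_1,g_2)$; abandoning it in favor of ``$L^\infty/L^1$ duality at $\sim\varrho$ scales'' cannot work, because the good part contributes at \emph{all} scales of the shift (A1 gives finitely many, with no bound in terms of $\varrho$), and size estimates via A1 summed over all those scales are unbounded. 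In the paper the factor $\varrho$ enters by splitting $\mathcal G$ into $\varrho$ subcollections $\mathcal G'$ according to $\log_2\ell(R)$ modulo $\varrho$, so that each $I$ has a unique transitional cube $R(I)\in\mathcal G'$; the $L^\infty\times L^1$ estimate via A1 is used only for $S_{R(I)}(g_1,b_{2I})$ and its symmetric counterpart. You also leave untreated the $b\times b$ interactions (pairs $I_1,I_2$ with $R(I_1)=R(I_2)=R$), which the paper handles using $\l f_j\r_R\le C\l f_j\r_Q$ for $R\not\subset E$ and disjointness of $\{I:R(I)=R\}$. So while the right ingredients are named, the decomposition lacking mean-zero bad parts and the rejection of the $L^2$ step leave the central estimate \eqref{Gprimeest} unproved as written.
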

Fixing $f_1,f_2\in L^1(\R^d)$ with compact support and having constructed $\mathcal S_\D(f_1,f_2)$ in the previous subsection, we turn to the proof of \eqref{unifest} assuming the lemma. Let $\mathbb S^\varrho$ be a fixed but arbitrary dyadic shift of complexity $\varrho$ constructed on $\mathcal D$. We expand 
\begin{equation}
\label{thesum}
\mathbb S^\varrho(f_1,f_2)=\mathbb S^\varrho(f_1 \cic{1}_{\overline Q},f_2\cic{1}_{\overline Q})=\sum_{k,\ell=1}^{2^d}\mathbb S^\varrho(f_1 \cic{1}_{ Q_k},f_2\cic{1}_{  Q_\ell}) 
\end{equation}
 and  estimate the terms $\mathbb S^\varrho(f_1 \cic{1}_{ Q_k},f_2\cic{1}_{  Q_\ell})$ for $k\neq \ell$. We have for any $R\subset \mathcal{D}$ that
\[
S_{R}(f_1 \cic{1}_{ Q_k},f_2\cic{1}_{  Q_\ell})\begin{cases} = 0 & \textrm{if either } Q_k \not\subseteq R   \textrm{ or } Q_\ell \not\subseteq R \\  = S_{R}(\l f_1 \r_{ Q_k}\cic{1}_{Q_k}, \l f_2 \r_{ Q_\ell} \cic{1}_{Q_\ell} ) & \textrm{if   } Q_k, Q_\ell \subset R, \ell(R) \geq 2^{\varrho} \ell(Q_\ell)  \\ \textrm{bounded in abs.\ value by }|Q_k| \l f_1 \r_{ Q_k} \l f_2 \r_{ Q_\ell} & \textrm{otherwise}.
\end{cases}
\]
The second condition is satisfied since  the kernel $s_R$ of $S_R$ is such that
\[
y\mapsto s_R(x_0,y), \qquad x\mapsto s_R(x,y_0)
\]
are constant on $Q_k,Q_\ell$ respectively for all $x_0,y_0$.
There are at most $\varrho$ ``otherwise'' cases. Thus  using the  $L^2$-bound  on the second summand in the first right hand side,
\[
\begin{split} &\quad 
|\mathbb S^\varrho(f_1 \cic{1}_{ Q_k},f_2\cic{1}_{  Q_\ell})| \leq \varrho |Q_k| \l f_1 \r_{ Q_k} \l f_2 \r_{ Q_\ell}+ \big|\mathbb S^\varrho_{\{R:Q_k, Q_\ell \subset R, \ell(R) \geq 2^{\varrho} \ell(Q_\ell)\} }(\l f_1 \r_{ Q_k}\cic{1}_{Q_k}, \l f_2 \r_{ Q_\ell} \cic{1}_{Q_\ell})\big| \\ &\leq (\varrho+1) |Q_k|\l f_1 \r_{ Q_k} \l f_2 \r_{ Q_\ell} \leq  2^{d} (\varrho+1) |\overline Q| \l f_1 \r_{ \overline Q} \l f_2 \r_{ \overline Q}
 \end{split}
 \]
 whence
\begin{equation}
\label{shiftpf2}
\sum_{k\neq \ell}
 |\mathbb S^\varrho(f_1 \cic{1}_{ Q_k},f_2\cic{1}_{  Q_\ell})| \leq 2^{2d} (\varrho+1) |\overline Q|\l f_1 \r_{ \overline Q} \l f_2 \r_{ \overline Q}.
 \end{equation} We are left with estimating the terms with $k=\ell$ in \eqref{thesum}. We apply  Lemma \ref{mainiter} recursively starting from $Q=Q_{k}$. The recursion stops at the $n$-th step, where $n$ is such that  $S^\varrho_{\mathcal D(Q)}=0$ for all  $Q\in \mathcal S_{\D,k,n}   $.   Such an $n$ exists because of assumption A1. We have
\[
\begin{split} &\quad 
|\mathbb S^\varrho(f_1 \cic{1}_{ Q_k},f_2\cic{1}_{  Q_k})| \leq C\varrho|Q_k| \l f_1 \r_{ Q_k} \l f_2 \r_{ Q_k}+ \sum_{Q\in \mathcal S_{\D,k,1}}  \big|\mathbb S^\varrho_{\mathcal D(Q)}(f_1\cic{1}_Q ,f_2\cic{1}_Q )\big|  \\ & \leq C \varrho|Q_k| \l f_1 \r_{ Q_k} \l f_2 \r_{ Q_k} +  C\varrho\sum_{I\in \mathcal S_{\D,k,1}} |I| \l f_1 \r_{ I} \l f_2 \r_{ I}  +  \sum_{Q\in \mathcal S_{\D,k,2}}  \big|\mathbb S^\varrho_{\mathcal D(Q)}(f_1\cic{1}_Q ,f_2\cic{1}_Q ) \big|
 \\ & \leq \cdots   \leq C\varrho|Q_k| \l f_1 \r_{ Q_k} \l f_2 \r_{ Q_k}+ C\varrho\sum_{Q\in \mathcal S_{\D,k}}   |Q| \l f_1 \r_{ Q} \l f_2 \r_{ Q}\end{split}
 \]
Summing over $k$, and recalling \eqref{shiftpf2}, we obtain that \eqref{thesum} is bounded by the right hand side of \eqref{unifest}, with $\mathcal S_\D$ constructed in the previous subsection, as claimed. Theorem \ref{domshift} is established, up to the proof of Lemma \ref{mainiter}.
\subsection{Proof of Lemma \ref{mainiter}}We set
\[
  E:= \bigcup_{I \in \mathcal I_Q} I, \qquad  \mathcal G:=\left\{R\in \mathcal{D}: R\not\subset E\right\}. \]
  Then 
\[ \mathbb S^\varrho(f_1\cic{1}_Q,f_2\cic{1}_Q)= \mathbb S_{\mathcal G}^\varrho(f_1\cic{1}_Q,f_2\cic{1}_Q)+
\sum_{I \in \mathcal I_Q}\mathbb S^\varrho_{\mathcal D(I)}(f_1\cic{1}_I ,f_2\cic{1}_I ) 
\]
We further decompose $\G$ into $\varrho$ subcollections $\mathcal G'$ such that the sidelengths of $R\in \mathcal G'$ are of the form $2^{\varrho n+m}$ for a fixed $m=0,\ldots,\varrho-1$ and for  some integer $n$. Therefore it suffices to prove that
\begin{equation}
\label{Gprimeest}|\mathbb S_{\mathcal G'}^\varrho(f_1\cic{1}_Q,f_2\cic{1}_Q)| \leq C|Q|\l f_1 \r_Q \l f_2\r_Q\end{equation}
for each of these subcollections $\G'$. To do so, we apply the Calder\'on-Zygmund decomposition to $f_j\cic{1}_Q,$ $j=1,2,$ based on the  collection of disjoint cubes $I\in \mathcal I_Q$. By the stopping condition \eqref{stop} and maximality, these cubes have the property that
\[
\l f_j\r_{I} \leq C \l f_j\r_{Q}, \qquad j=1,2.
\]
Therefore, denoting by $f_{jI}$ the average of $f_j$ on $I$ and setting
\begin{equation} \label{CZdec}
f_j\cic{1}_{Q}=g_j+b_j :=\left(f_j \cic{1}_{E^c} + \sum_{I \in \mathcal I_Q} f_{jI} \cic{1}_I\right) + \left(  \sum_{I \in \mathcal I_Q}b_{jI}\right), \qquad b_{jI}:= (f_j- f_{jI}) \cic{1}_I
\end{equation}
we have for $j=1,2,$
\begin{align}
\label{CZ1}& \|g_j\|_{\infty}\leq   C \l f_j\r_{Q}, \\
\label{CZ2}& \|g_j\|_{2}\leq   C |Q|^{\frac12}\l f_j\r_{Q}, 
\\
\label{CZ3}& \|b_{jI}\|_{1}\leq   C |I|\l f_j\r_{Q}, \qquad I \in \mathcal I_Q.
\end{align}
We need to estimate three types of contributions:
\begin{equation}
\label{Gprimeest2}|\mathbb S_{\mathcal G'}^\varrho(f_1\cic{1}_Q,f_2\cic{1}_Q)| \leq 
|\mathbb S_{\mathcal G'}^\varrho(g_1 ,g_2 )|+ |\mathbb S_{\mathcal G'}^\varrho(g_1 ,b_2 )|+ |\mathbb S_{\mathcal G'}^\varrho(b_1 ,g_2 )| + |\mathbb S_{\mathcal G'}^\varrho(b_1 ,b_2)|
\end{equation}
The $L^2$-boundedness and \eqref{CZ2} yield immediately that
\begin{equation}
\label{Gprimeest3} 
|\mathbb S_{\mathcal G'}^\varrho(g_1 ,g_2 ) |\leq  \|g_1\|_{2}\|g_2\|_{2} \leq C |Q| \l f_1\r_{Q}\l f_2\r_{Q}.
\end{equation}
The second and third summand in \eqref{Gprimeest2} are estimated symmetrically. Considering for instance the second summand, we split
\begin{equation}
\label{Gprimeest4}
|\mathbb S_{\mathcal G'}^\varrho(g_1 ,b_2 )|\leq \sum_{I\in \mathcal I_Q} |\mathbb S_{\mathcal G'}^\varrho(g_1 ,b_{2I} )| \leq \sum_{I\in \mathcal I_Q} \sum_{R\supsetneq I} |S_R (g_1 ,b_{2I} )| \end{equation}
Now when $R\supset I$ and $\ell(R)\geq 2^{\varrho} \ell (I)$, the kernel $s_R(x,\cdot)$ is constant on $I$ and $b_{2I}$ has zero average, whence  $S_R(g_1 ,b_{2I} )=0$. Thus $S_R(g_1 ,b_{2I} )=0$ unless $R=R(I)$, the unique $R\supsetneq I$ with $\ell(R) <2^{\varrho} \ell(I)$. In this case,
we estimate, using the normalization of $s_R$, \eqref{CZ1}, \eqref{CZ3} 
\[
|S_R(g_1 ,b_{2I} )|\leq \|g_1\|_\infty \|b_{2I}\|_1 \leq C |I| \l f_1\r_{Q}\l f_2\r_{Q}.
\]
Now, summing over $I$ in \eqref{Gprimeest4} yields that  the second summand in \eqref{Gprimeest2} is also bounded by $C |Q| \l f_1\r_{Q}\l f_2\r_{Q}$.
We are left with estimating the fourth summand in \eqref{Gprimeest2}. Let $I_1,I_2\in \mathcal I_Q$. Then, reasoning as previously done for $S_R(g_1 ,b_{2I} )=0$, we notice that
$S_R(b_{1I_1} ,b_{2I_2} )=0$ unless $R=R(I_1)=R(I_2)$. Preliminarily observe that the intervals $\{I\in \mathcal I_Q: R(I)=R \}$ are pairwise disjoint and contained in $R$, so that
\begin{equation}
\label{theabove}
\sum_{I: R(I)=R} \|b_{jI}\|_{1} \leq 2 \sum_{I: R(I_1)=R} \|f_j\cic{1}_{I}\|_{1} \leq 2|R|\l f_j \r_R \leq C|R|\l f_j \r_Q,
\end{equation}
where the last inequality follows from $R\not\subset E.$ Therefore 
\begin{equation}
\label{Gprimeest5}
\begin{split} & \quad |\mathbb S_{\mathcal G'}^\varrho(b_1 ,b_2)| \leq 
\sum_{ R \in \mathcal G'}|S_R(b_{1} ,b_{2} )|\leq \sum_{ R \in \mathcal G'} \sum_{I_2: R(I_2)=R} \Big| S_R\left(\sum_{I_1: R(I_1)=R} b_{1I_1} ,b_{2I_2} \right)\Big| \\ & \leq  \sum_{ R \in \mathcal G'} \sum_{I_2: R(I_2)=R}\|b_{2I_2}\|_1 \frac{1}{|R|}  \sum_{I_1: R(I_1)=R} \|b_{1I_1}\|_{1}  \leq C  \l f_1 \r_Q \sum_{ R \in \mathcal G'} \sum_{I_2: R(I_2)=R}\|b_{2I_2}\|_1\\ & \leq C  \l f_1 \r_Q   \sum_{I_2\in \mathcal I_Q }\|f_2\cic{1}_{I_2}\|_1 \leq C|Q|\l f_1 \r_Q   \l f_2 \r_Q,   
\end{split}
\end{equation}
and the fourth summand in  \eqref{Gprimeest2} is also estimated. We have used \eqref{theabove}
to get the second inequality in the second line of the above display,  and the fact that $\{I_2: R(I_2)=R\}$ are disjoint collections over $R\in \mathcal G'$ to pass to the last line. Collecting \eqref{Gprimeest3}, \eqref{Gprimeest4} and \eqref{Gprimeest5} proves the bound \eqref{Gprimeest} and completes the proof of Lemma \ref{mainiter}, and in turn of Theorem \ref{domshift}.

\

\section{The vector-valued case and a matrix $A_2$ bound} \label{s3}

In this section, we extend Theorem \ref{domshift}, as well as its corollaries, to the case of functions taking values in a finite-dimensional Euclidean space $\mathbb F^{n}$. We restrict ourselves to $\mathbb F=\R$ as  the  $\mathbb C^{n}$-valued case  can be easily recovered from the $\R^{2n}$-valued one. 

We begin with defining a handy replacement for the local average of a nonnegative scalar valued function. For a cube $Q\subset \R^d,$ we set $\Phi(Q)=\{\varphi: \R^d\to \R, \|\varphi\|_\infty\leq 1, \supp \phi \subset Q\}$. Then, for $f\in L^1_{\mathrm{loc}}(\R^d; \R^n)$,  set 
\begin{equation} \label{convexbody}
\l f \r_Q:= \left\{\frac{1}{|Q|}\int f \varphi \, \d x: \varphi\in \Phi(Q)  \right\}\subset \R^n.
\end{equation}
It is not hard to see that $\l f \r_Q$ is a closed\footnote{The unit ball of $L^\infty(Q)$ is weak-* compact.} convex symmetric (that is, invariant under reflection through the origin) set. It is also not hard to see that
\[
\sup_{v\in \l f \r_Q } |v| \leq \l |f| \r_Q
\]
where the right hand side simply stands for the average of the scalar function $|f|$ on $Q$.
We have overloaded the notation since the two concepts are essentially the same if the input is scalar.
If $K, H$ are closed convex symmetric sets then their Minkowski product
\[
\{kh:k\in K,h\in H\}
\] 
is a closed symmetric interval and $KH$ denotes indifferently the above interval or its right endpoint. With the above notation, we obtain the following uniform domination theorem for the bilinear forms\footnote{Here  $f_{1,j}$ stands for the $j$-th coordinate of the function $f_{1}$.}
\[
\mathbb S^\varrho \otimes \mathrm{Id}_{\R^n} (f_1,f_2) = \sum_{j=1}^n \mathbb S^\varrho(f_{1,j},f_{2,j}).
\]
\begin{theorem}  \label{domshiftvec} There exists a  constant $C$ depending only on the dimensions $d,n$ such that the following holds. For every $f_1,f_2 \in L^1(\R^d;\R^n)$  with compact support and every dyadic lattice $\mathcal D$  there exists a sparse collection $\mathcal S_\D$ such that\begin{equation}
\label{unifestvec}
\sup_{   \mathbb S^\varrho} |\mathbb S^\varrho \otimes \mathrm{Id}_{\R^n} (f_1,f_2)| \leq C \varrho    \sum_{Q \in \mathcal S_\D} |Q| \l f_1 \r_{Q} \l f_2 \r_{Q}  
\end{equation}
the supremum being taken  over all  {dyadic shifts} $ \mathbb S^\varrho  $ of complexity $\varrho$ constructed on $\mathcal D$.\end{theorem}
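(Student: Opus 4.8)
The plan is to mimic, essentially verbatim, the scalar proof of Theorem~\ref{domshift} given in Section~\ref{spf}, replacing scalar local averages $\l f_j\r_Q$ by the convex-body averages \eqref{convexbody} and the numerical estimates $\l g_j\r_Q\le C\l f_j\r_Q$, $\|g_j\|_2\le C|Q|^{1/2}\l f_j\r_Q$, etc., by their vector-valued counterparts. The sparse collection $\mathcal S_\D$ is constructed exactly as before, using the stopping cubes $\mathcal I_Q$ defined by the stopping condition \eqref{stop} applied to the \emph{scalar} functions $|f_1|,|f_2|$ (equivalently, by $\l|f_j|\r_I>2^8\l|f_j|\r_Q$). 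Since $\sup_{v\in\l f_j\r_Q}|v|\le\l|f_j|\r_Q$ and the reverse-type bound $\l|f_j|\r_I\le C\l|f_j|\r_Q$ holds for $I\in\mathcal I_Q$ by maximality, the packing estimate \eqref{sparsepack} and hence sparseness of $\mathcal S_\D$ are unchanged. The whole proof then reduces, as before, to a vector-valued version of Lemma~\ref{mainiter}, whose statement reads
\[
|\mathbb S^\varrho\otimes\mathrm{Id}_{\R^n}(f_1\cic{1}_Q,f_2\cic{1}_Q)|\le C\varrho|Q|\l f_1\r_Q\l f_2\r_Q+\sum_{I\in\mathcal I_Q}\big|\mathbb S^\varrho_{\mathcal D(I)}\otimes\mathrm{Id}_{\R^n}(f_1\cic{1}_I,f_2\cic{1}_I)\big|,
\]
where the products on the right are Minkowski products of convex symmetric intervals.

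First I would introduce the vector-valued Calder\'on--Zygmund decomposition (this is the content of the promised Lemma~\ref{vectorstop}): write $f_j\cic{1}_Q=g_j+b_j$ exactly as in \eqref{CZdec}, with $f_{jI}$ now the vector average of $f_j$ on $I$ and $b_{jI}=(f_j-f_{jI})\cic{1}_I$ still having zero (vector) mean. The three families of estimates become: $\l g_j\r_Q\subseteq C\l f_j\r_Q$ as a set inclusion of convex bodies (equivalently, pointwise $|g_j|\le C\l|f_j|\r_Q$ since $g_j$ is a convex combination of values/averages controlled on $Q$), $\|g_j\|_{L^2(Q;\R^n)}\le C|Q|^{1/2}\l|f_j|\r_Q$ by the $L^\infty$ bound and $|Q|^{1/2}$, and $\int_I|b_{jI}|\le C|I|\l|f_j|\r_Q$. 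The key scalar mechanism --- that $\mathbb S^\varrho_{\mathcal G'}$ only pairs $b_{jI}$ with the single cube $R(I)$ because of the cancellation of $b_{jI}$ against the kernel which is constant on $I$ when $\ell(R)\ge 2^\varrho\ell(I)$, as guaranteed by A3 --- is entirely unaffected by the number of components, since $\mathbb S^\varrho\otimes\mathrm{Id}_{\R^n}$ acts componentwise with the same kernel $s_R$.

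The estimates for the four terms $(g_1,g_2)$, $(g_1,b_2)$, $(b_1,g_2)$, $(b_1,b_2)$ then proceed as in \eqref{Gprimeest3}--\eqref{Gprimeest5}. For $(g_1,g_2)$ one uses assumption A2 applied to each scalar form $\mathbb S^\varrho(g_{1,j},g_{2,j})$ together with Cauchy--Schwarz over $j$: $|\sum_j\mathbb S^\varrho_{\mathcal Q}(g_{1,j},g_{2,j})|\le\sum_j\|g_{1,j}\|_2\|g_{2,j}\|_2\le\|g_1\|_2\|g_2\|_2\le C|Q|\l|f_1|\r_Q\l|f_2|\r_Q$, and one checks $\l|f_1|\r_Q\l|f_2|\r_Q\le\l f_1\r_Q\l f_2\r_Q$ as endpoints of Minkowski intervals (indeed the right-hand side dominates since $\l f_j\r_Q$ contains the single vector of length $\l|f_j|\r_Q$ in any direction after suitable choice of $\varphi$; more precisely one verifies the needed direction of the inequality directly). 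For the mixed and $bb$ terms one sums $\|b_{jI}\|_{L^1}$ over the disjoint families $\{I:R(I)=R\}$ exactly as in \eqref{theabove}--\eqref{Gprimeest5}, with $|b_{jI}|$ in place of $|b_{jI}|$ scalar, producing the bound $C|Q|\l|f_1|\r_Q\l|f_2|\r_Q$, which is again absorbed into $C|Q|\l f_1\r_Q\l f_2\r_Q$. Assembling these gives \eqref{Gprimeest} in the vector setting and hence the vector Lemma~\ref{mainiter}; feeding this into the iteration scheme of Section~\ref{spf} verbatim yields \eqref{unifestvec}.

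The main obstacle I anticipate is not analytic but bookkeeping: one must be careful and consistent about the meaning of products and inequalities of convex bodies. The statement $|\mathbb S^\varrho\otimes\mathrm{Id}_{\R^n}(f_1,f_2)|\le C\varrho\sum_Q|Q|\l f_1\r_Q\l f_2\r_Q$ should be read with the right-hand side being the (sum of) right endpoints of the Minkowski intervals $\l f_1\r_Q\l f_2\r_Q$, so that the inequality is a genuine numerical one; the point is simply that every place where the scalar proof wrote $\l f_j\r_Q$ we now have the larger quantity $\l|f_j|\r_Q\le\l f_1\r_Q\l f_2\r_Q^{1/\cdots}$---more carefully, $\l|f_1|\r_Q\l|f_2|\r_Q\le\l f_1\r_Q\l f_2\r_Q$ holds because choosing $\varphi\equiv\sign$ in each coordinate shows $\l f_j\r_Q$ contains a vector of Euclidean norm $\l|f_j|\r_Q$. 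Once this dictionary is fixed, no new idea beyond the scalar argument is needed; the cancellation structure coming from A3 and the CZ decomposition is completely insensitive to the target dimension $n$, and the constant $C$ picks up only a harmless dependence on $n$ through the Cauchy--Schwarz step in the $(g_1,g_2)$ estimate (which can in fact be avoided, keeping $C$ independent of $n$, but we do not pursue this).
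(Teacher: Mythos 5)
There is a genuine gap in the reduction you propose, and it sits precisely at the point you flag as ``bookkeeping.'' The inequality you claim, $\l|f_1|\r_Q\l|f_2|\r_Q\le\l f_1\r_Q\l f_2\r_Q$, goes the \emph{wrong way}: the correct relation is $\l f_1\r_Q\l f_2\r_Q\le\l|f_1|\r_Q\l|f_2|\r_Q$, with strict inequality typical. The right endpoint of the Minkowski interval $\l f_1\r_Q\l f_2\r_Q$ is $\sup\{v_1\cdot v_2:v_j\in\l f_j\r_Q\}$, and by the paper's own observation $\sup_{v\in\l f_j\r_Q}|v|\le\l|f_j|\r_Q$, so Cauchy--Schwarz gives the domination in the direction opposite to what you need. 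Your justification, ``choosing $\varphi\equiv\sign$ in each coordinate shows $\l f_j\r_Q$ contains a vector of Euclidean norm $\l|f_j|\r_Q$,'' is not available: an element of $\Phi(Q)$ is a \emph{single scalar} test function $\varphi:\R^d\to\R$ applied simultaneously to all $n$ coordinates of $f_j$, so you cannot pick $\varphi=\sign(f_{j,1})$ in the first slot, $\varphi=\sign(f_{j,2})$ in the second, and so on. Concretely, for $n=2$, $Q=[0,1]$ and $f=(\cic 1_{[0,1/2]},\cic 1_{[1/2,1]})$, one has $\l|f|\r_Q=1$ while $\sup_{v\in\l f\r_Q}|v|=1/\sqrt2$, so the convex-body average is strictly smaller. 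Since your stopping time, your CZ decomposition, and every numerical estimate you derive are controlled by the scalar quantities $\l|f_j|\r_Q$, what you actually prove is the (true but weaker) statement with $\l|f_1|\r_Q\l|f_2|\r_Q$ on the right of \eqref{unifestvec} --- and this is not equivalent to the theorem, nor sufficient for the matrix $A_2$ application, which depends crucially on the directional information retained by the convex bodies.

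To prove the theorem as stated, the paper does not build the stopping cubes from $|f_j|$; instead it uses a genuinely convex set-valued CZ lemma (Lemma~\ref{vectorstop}) with the stopping condition $\l f\r_I\not\subset A\l f\r_Q$, whose packing estimate is proved by passing through the John ellipsoid of $\l f\r_Q$. Then, in the analogue of Lemma~\ref{mainiter}, it normalizes the John ellipsoids of \emph{both} $\l f_1\r_Q$ and $\l f_2\r_Q$ to the unit ball via $\mathrm{GL}_n(\R)$ changes of variables $f_j=A_j\tilde f_j$, proves a uniform coordinatewise bound $|\mathbb S^\varrho_{\mathcal G}(\tilde f_{1,k_1}\cic 1_Q,\tilde f_{2,k_2}\cic 1_Q)|\le C\varrho|Q|$ by the scalar CZ argument applied to the normalized functions, and then undoes the change of variables, using that $A_je_{k_j}\in\l f_j\r_Q$ to land exactly on $|(A_1e_{k_1})\cdot(A_2e_{k_2})|\le\l f_1\r_Q\l f_2\r_Q$. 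That last step is the mechanism that produces the convex-body product on the right-hand side; your outline has no replacement for it.
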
 
We now consider the class of singular integral operators $\bar{\mathcal U}=\{\mathfrak{Re}\, T: T \in \mathcal U\}$ where the class $\mathcal U$  has been  defined in Section \ref{s1}, and their canonical  extensions to $\R^n$-valued functions,
which we assume defined on the dense subspace   $\mathcal W^n\subset L^2(\R^d;\R^n)$.
As for the scalar case, Proposition \ref{rt} applied componentwise allows us to extend the uniform domination principle of Theorem \ref{domshiftvec} to such family of singular integral operators.
\begin{theorem} \label{mainthmvec}  There exists a  constant $C$ depending only on the dimensions $d,n$ such that the following holds.  For every $f_1,f_2 \in \mathcal W^n$ and having compact support  there exists a sparse collection   $\mathcal S$ such that
\[
\sup_{T\in \bar{\mathcal U}} |\l   T\otimes \mathrm{Id}_{\R^n}( f_1),f_2\r| \leq C  \sum_{Q \in \mathcal S} |Q| \l f_1 \r_{Q} \l f_2 \r_{Q}.  
\]
\end{theorem}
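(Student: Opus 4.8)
The plan is to deduce Theorem \ref{mainthmvec} from the vector-valued dyadic shift domination Theorem \ref{domshiftvec} in exactly the way Theorem \ref{mainthm} was deduced from Theorem \ref{domshift} in the scalar case, but being careful that every step is compatible with the convex-body-valued bracket $\l\cdot\r_Q$ defined in \eqref{convexbody} and its Minkowski product. The three ingredients I would assemble are: (i) the componentwise representation formula of Proposition \ref{rt}; (ii) the uniform bound of Theorem \ref{domshiftvec} over \emph{all} dyadic lattices; and (iii) a stopping-time reduction from an arbitrary sparse collection to a single sparse collection depending only on $f_1,f_2$.

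First I would fix $f_1,f_2\in\mathcal W^n$ with compact support and a fixed $T\in\mathcal U$, so $\mathfrak{Re}\,T\in\bar{\mathcal U}$. Applying Proposition \ref{rt} in each of the $n$ coordinates $f_{1,j},f_{2,j}$ and summing gives $\l T\otimes\mathrm{Id}_{\R^n}(f_1),f_2\r = \sum_{j=1}^n \l Tf_{1,j},\overline{f_{2,j}}\r$; since each coordinate function is real this identity passes to $\mathfrak{Re}\,T$, and the right-hand side of \eqref{propeq} controls $\sum_j |\l Tf_{1,j},\overline{f_{2,j}}\r|$ by $C\sup_{\varrho,\mathcal D,\mathbb S^\varrho}\varrho^{-1}\sum_j|\mathbb S^\varrho(f_{1,j},f_{2,j})| = C\sup\varrho^{-1}|\mathbb S^\varrho\otimes\mathrm{Id}_{\R^n}(f_1,f_2)|$. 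The point requiring a line of care here is that one first needs $\sum_j|\mathbb S^\varrho(f_{1,j},f_{2,j})|$ rather than $|\sum_j\mathbb S^\varrho(f_{1,j},f_{2,j})|$; this is harmless because one may apply Proposition \ref{rt} to each coordinate separately and then sum, absorbing the $n$ phases into the constant $C=C(d,n)$. Now invoking Theorem \ref{domshiftvec} for the pair $f_1,f_2$, for each $\mathcal D$ there is a sparse $\mathcal S_{\mathcal D}$ with $\varrho^{-1}|\mathbb S^\varrho\otimes\mathrm{Id}_{\R^n}(f_1,f_2)|\le C\sum_{Q\in\mathcal S_{\mathcal D}}|Q|\l f_1\r_Q\l f_2\r_Q$, uniformly in $\varrho$ and $\mathbb S^\varrho$. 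Combining, we obtain
\[
\sup_{T\in\bar{\mathcal U}}|\l T\otimes\mathrm{Id}_{\R^n}(f_1),f_2\r| \le C\sup_{\mathcal T}\sum_{Q\in\mathcal T}|Q|\l f_1\r_Q\l f_2\r_Q,
\]
the supremum over all sparse collections $\mathcal T$.

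It remains to replace the supremum over all sparse collections by a single sparse collection $\mathcal S$ depending only on $f_1,f_2$. Here I would run the same stopping-time argument used in the scalar proof of Theorem \ref{mainthm}, stopping on the scalar quantity $\l|f_1|\r_Q\l|f_2|\r_Q$ (the honest average of the scalar functions $|f_1|,|f_2|$), exactly as in \cite[Lemma 4.7]{LMena}. The only adaptation needed is the monotonicity/comparability property of the convex-body bracket: from the inequality $\sup_{v\in\l f\r_Q}|v|\le\l|f|\r_Q$ recorded after \eqref{convexbody} one gets $\l f_1\r_Q\l f_2\r_Q\le\l|f_1|\r_Q\l|f_2|\r_Q$ for the Minkowski product interpreted as its right endpoint, so that a sparse collection extracted via the scalar stopping time on $\l|f_1|\r\l|f_2|\r$ dominates any other. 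Applying this gives a sparse $\mathcal S=\mathcal S(f_1,f_2)$ with $\sup_{\mathcal T}\sum_{Q\in\mathcal T}|Q|\l f_1\r_Q\l f_2\r_Q\le C\sum_{Q\in\mathcal S}|Q|\l f_1\r_Q\l f_2\r_Q$, and chaining the estimates completes the proof.

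I expect the genuinely substantive content to lie upstream, in Theorem \ref{domshiftvec} (whose proof is the vector-valued Calder\'on--Zygmund decomposition of Lemma \ref{vectorstop}); the deduction of Theorem \ref{mainthmvec} itself is essentially bookkeeping. Within that bookkeeping the one mild subtlety — the main obstacle, such as it is — is making sure the Minkowski-product bracket behaves well enough under the scalar stopping-time extraction, i.e. that one may freely pass between the convex-body quantity $\l f_1\r_Q\l f_2\r_Q$ and the scalar majorant $\l|f_1|\r_Q\l|f_2|\r_Q$; once the comparability inequality from \eqref{convexbody} is in hand this is immediate, and the rest follows verbatim from the scalar argument, with all implicit constants now allowed to depend on $n$ as well as $d$.
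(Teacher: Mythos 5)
Your overall skeleton — apply Proposition \ref{rt} componentwise, combine with Theorem \ref{domshiftvec} over all lattices, then collapse the supremum over sparse collections to a single sparse collection — is exactly what the paper does (the paper itself leaves the last step implicit with a reference to the scalar analogue). The first two steps you carry out correctly. However, there is a genuine gap in your execution of the last step.

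You propose to run the stopping time of \cite[Lemma 4.7]{LMena} on the \emph{scalar} majorant $\l|f_1|\r_Q\l|f_2|\r_Q$ and then pass back to the convex-body bracket using $\l f_1\r_Q\l f_2\r_Q\le\l|f_1|\r_Q\l|f_2|\r_Q$. This chain does not close. Denote by $\mathcal S$ the sparse family extracted by the scalar stopping time. What you obtain is
\[
\sum_{Q\in\mathcal T}|Q|\l f_1\r_Q\l f_2\r_Q\;\le\;\sum_{Q\in\mathcal T}|Q|\l|f_1|\r_Q\l|f_2|\r_Q\;\le\; C\sum_{Q\in\mathcal S}|Q|\l|f_1|\r_Q\l|f_2|\r_Q,
\]
and to conclude you would need the \emph{reverse} comparison $\l|f_1|\r_Q\l|f_2|\r_Q\le C\l f_1\r_Q\l f_2\r_Q$ on the cubes $Q\in\mathcal S$. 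That reverse inequality is simply false: take $n=2$, $Q=[0,1]$, $f_1=(1,0)\cic{1}_Q$, $f_2=(0,1)\cic{1}_Q$; then $\l f_1\r_Q$ and $\l f_2\r_Q$ are orthogonal segments, so the Minkowski product $\l f_1\r_Q\l f_2\r_Q=\{0\}$, while $\l|f_1|\r_Q\l|f_2|\r_Q=1$. The comparison $\sup_{v\in\l f\r_Q}|v|\lesssim_n\l|f|\r_Q\lesssim_n\sup_{v\in\l f\r_Q}|v|$ that you implicitly invoke holds for a \emph{single} bracket, but the Minkowski product of two brackets can degenerate when the bodies are nearly orthogonal, and this cannot be detected by the scalar averages. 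Moreover, if you settled for the scalar right-hand side $\sum_{\mathcal S}|Q|\l|f_1|\r_Q\l|f_2|\r_Q$ you would lose precisely the structure that makes Corollary \ref{corweight} go through, since the weighted estimate extracts a representative $F_{jQ}\in\l V_jf_j\r_Q$ from the convex body.

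The fix is to avoid the scalar reduction altogether and run the stopping time directly on the convex-body containment condition from Lemma \ref{vectorstop}: stop at the maximal dyadic $I\subset Q$ with $\l f_j\r_I\not\subset A\l f_j\r_Q$ for some $j$, with $A=2^8n^2$. Maximality gives $\l f_j\r_R\subset A\l f_j\r_Q$ for every non-stopped $R$ between $Q$ and its stopping children, and the Minkowski product is monotone under containment, so $\l f_1\r_R\l f_2\r_R\le A^2\l f_1\r_Q\l f_2\r_Q$. This, together with the packing estimate \eqref{convdue} and the standard shifted-dyadic/Carleson argument of \cite[Lemma 4.7]{LMena}, yields the desired single sparse family $\mathcal S$ with convex-body brackets on the right-hand side. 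The rest of your argument is sound, and the heavy lifting does indeed live in Theorem \ref{domshiftvec} and Lemma \ref{vectorstop} as you anticipate.
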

\begin{remark} We were introduced to  definition \eqref{convexbody} in the context of   sparse domination  by S.\ Treil, who, jointly with Nazarov, Petermichl and Volberg, announced the following  result (\cite{NTVc}, with full proof appearing in \cite{NPTV}): for  a standard real-valued Calder\'on-Zygmund kernel operator and for  $f$ in a suitable dense subspace of $L^2(\R^d;\R^n)$ there exist $2^d$ sparse collections $\mathcal S_1,...,\mathcal S_{2^d}$ such that
\begin{equation} \label{NTVdom}
T\otimes \mathrm{Id}_{\R^n}(f)(x) \in  \sum_
{k=1}^{2^d}\sum_{Q \in \mathcal S_k} C\l f \r_{Q} \cic{1}_Q(x)
\end{equation}
almost every $x\in \R^d$
where the summation symbols stand for Minkowski sum. We are not aware of the details of their proof at the time of writing; however,  our result can be suitably interpreted as the dual form of their theorem. 
\end{remark}
 
Finally, we detail an application of Theorem \ref{mainthmvec} to matrix weighted bounds. We say that   $W\in L^1_{\mathrm{loc}}(\R^d; \mathbb{M}_{n,n}(\R))$ is a \emph{matrix weight} if  it is positive semidefinite almost everywhere. We say that a matrix weight $W$ belongs to the class $A_2$ if 
\begin{equation*}
[W]_{A_2}:=\sup_{Q}\|\l W\r_Q^{\frac{1}{2}}\l W^{-1}\r_Q^\frac{1}{2}\|^2<\infty.
\end{equation*}
The following estimate on the weighted space $L^2(W)$, with norm
\[
\|f\|_{L^2(W)}^2=\int_{\R^d} |W^{\frac12}(x)f(x)|^2\, \d x
\]
is a rather immediate consequence of the domination Theorem \ref{mainthmvec} and    of the matrix Carleson embedding theorem of Treil and Volberg \cite{TV}. The derivation from Theorem \ref{mainthmvec} borrows from the approach of Bickel and Wick \cite{BW} to the analogous estimate for $\R^n$-valued sparse averaging operators.
\begin{corollary}\label{corweight}
Let $W\in L^1_{\mathrm{loc}}(\R^d; \mathbb{M}_{n,n}(\R))$ be an  $A_2$ matrix weight. Then there holds
\[ \sup_{T\in\bar{\mathcal U}} \left\| T\otimes \mathrm{Id}_{\R^n}  \right\|_{{L^2(W)}\rightarrow {L^2(W)}} \leq C [W]^{\frac32}_{A_2} \]
where the positive constant $C$ depends on the dimensions $n,d$ only.
\end{corollary}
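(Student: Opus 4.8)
The plan is to deduce Corollary \ref{corweight} from the sparse domination of Theorem \ref{mainthmvec} by a standard duality argument adapted to the convex-body setting. Fix an $A_2$ matrix weight $W$, write $U=W^{1/2}$ and, for $f\in L^2(W;\R^n)$ and $g\in L^2(W^{-1};\R^n)$, observe that
\begin{equation*}
|\l T\otimes \mathrm{Id}_{\R^n}(f),g\r| = |\l (T\otimes \mathrm{Id})(U^{-1}Uf), U(U^{-1}g)\r|,
\end{equation*}
so that by Theorem \ref{mainthmvec} it suffices to bound $\sum_{Q\in\mathcal S}|Q|\,\l f\r_Q\,\l g\r_Q$ by $C[W]_{A_2}^{3/2}\|f\|_{L^2(W)}\|g\|_{L^2(W^{-1})}$. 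Here one must be careful that $\l f\r_Q\l g\r_Q$ denotes the Minkowski product of convex bodies and ultimately its right endpoint; the key pointwise inequality is that for the convex body $\l f\r_Q$ one has $\l f\r_Q \subseteq \l W^{-1}\r_Q^{1/2}\,\l Wf\r^{\mathrm{sc}}_Q$ type estimates where $\l\cdot\r^{\mathrm{sc}}$ denotes a scalar average, after inserting $W^{1/2}W^{-1/2}$ inside the defining integral and applying Cauchy--Schwarz. Concretely, for any $\varphi\in\Phi(Q)$,
\begin{equation*}
\frac{1}{|Q|}\int f\varphi\,\d x = \frac{1}{|Q|}\int W^{-1/2}(W^{1/2}f)\varphi\,\d x,
\end{equation*}
and estimating the norm of this vector by Cauchy--Schwarz produces the factor $\l W^{-1}\r_Q^{1/2}$ times $(\frac1{|Q|}\int|W^{1/2}f|^2)^{1/2}$; symmetrically for $g$ with $W$ in place of $W^{-1}$.

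Next I would run the Carleson embedding scheme. Introduce the scalar quantities $\lambda_Q := \| \l W\r_Q^{1/2}\l W^{-1}\r_Q^{1/2}\|^2 \le [W]_{A_2}$ and reduce the sparse sum to something of the form
\begin{equation*}
\sum_{Q\in\mathcal S}|Q|\,\l f\r_Q\l g\r_Q \;\lesssim\; [W]_{A_2}\sum_{Q\in\mathcal S}|Q|\,\Big(\tfrac{1}{|Q|}\int_Q |\l W\r_Q^{-1/2}W^{1/2}f|^2\Big)^{1/2}\Big(\tfrac{1}{|Q|}\int_Q|\l W^{-1}\r_Q^{-1/2}W^{-1/2}g|^2\Big)^{1/2},
\end{equation*}
where one power of $[W]_{A_2}$ has been spent (this is where the exponent $\tfrac32$ starts to appear; one more half-power will be consumed by the embedding, as in Bickel--Wick). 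Then apply Cauchy--Schwarz in $Q$ over the sparse family together with the $\eta$-sparseness (the disjoint sets $E_Q$) to replace the sum of products by a product of two sums, and invoke the matrix Carleson embedding theorem of Treil--Volberg \cite{TV}: the sequence of operators $\{|Q|\,\l W\r_Q^{-1}\}$ (suitably, $\{ |E_Q|\,\l W^{-1}\r_Q\}$ paired against $W$, in the formulation of \cite{TV}) satisfies the matrix Carleson condition with constant $\lesssim[W]_{A_2}$, which yields
\begin{equation*}
\sum_{Q\in\mathcal S}|Q|\,\tfrac{1}{|Q|}\int_Q|\l W\r_Q^{-1/2}W^{1/2}f|^2\,\d x \;\lesssim\; [W]_{A_2}\,\|f\|_{L^2(W)}^2,
\end{equation*}
and the symmetric statement for $g$. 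Multiplying the two embedding bounds after the Cauchy--Schwarz split and collecting the powers of $[W]_{A_2}$ gives the claimed $[W]_{A_2}^{3/2}$.

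The main obstacle is bookkeeping the convex-body/Minkowski-product formalism cleanly: one has to make sure that all the scalar reductions (passing from $\l f\r_Q$ as a set to a single scalar average, inserting $W^{\pm1/2}$, applying Cauchy--Schwarz) are valid uniformly over $\varphi\in\Phi(Q)$ and respect the Minkowski product structure, and that the norm $\|\l W\r_Q^{1/2}\l W^{-1}\r_Q^{1/2}\|^2$ really controls the cross terms $\|\l W\r_Q^{-1/2}\l W^{-1}\r_Q^{-1/2}\|$ after the substitutions. The second delicate point is checking the precise hypotheses of the Treil--Volberg matrix Carleson embedding theorem and matching them to the sequence of averages produced by the sparse family; this is exactly the step where the Bickel--Wick argument \cite{BW} for $\R^n$-valued sparse averaging operators can be quoted almost verbatim, since once the sparse bilinear form has been reduced to the convex-body sparse operator estimate, the weighted bound is identical to theirs. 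Everything else — the duality, the $A_2$ characteristic algebra, the Cauchy--Schwarz splitting — is routine.
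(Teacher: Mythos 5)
Your overall strategy (dualize, invoke Theorem \ref{mainthmvec}, then run the Bickel--Wick reduction and the Treil--Volberg/Isralowitz--Kwon--Pott matrix Carleson embedding) is the paper's, but your concrete reduction breaks at the key step. By applying Cauchy--Schwarz \emph{inside} the integral defining the convex-body element, i.e.\ bounding $\bigl|\tfrac1{|Q|}\int W^{-1/2}(W^{1/2}f)\varphi\bigr|$ by $\bigl\|\l W^{-1}\r_Q^{1/2}\bigr\|\bigl(\tfrac1{|Q|}\int_Q|W^{1/2}f|^2\bigr)^{1/2}$, you trade the linear average for a localized $L^2$ norm, and the inequality you then need,
$\sum_{Q\in\mathcal S}|Q|\cdot\tfrac1{|Q|}\int_Q|\l W\r_Q^{-1/2}W^{1/2}f|^2\lesssim[W]_{A_2}\|f\|^2_{L^2(W)}$,
is not an instance of the matrix Carleson embedding theorem and is in fact false: already for $W=\mathrm{Id}$ it reads $\sum_{Q\in\mathcal S}\int_Q|f|^2\lesssim\|f\|_2^2$, which fails for a sparse family containing a long nested chain (take $d=1$, $Q_k=[0,2^{-k}]$ with $E_{Q_k}=[2^{-k-1},2^{-k}]$, $k=0,\dots,N$, and $f=2^{N/2}\cic{1}_{[0,2^{-N}]}$: the left side is $\simeq N$ while $\|f\|_2=1$). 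The Treil--Volberg/IKP theorem controls sums of $\bigl|A_Q^{1/2}\tfrac1{|Q|}\int_Q V h\,\phi\bigr|^2$, i.e.\ the average is taken \emph{before} squaring, so the linear-functional structure of the convex-body elements must be preserved, not Cauchy--Schwarzed away. Your exponent bookkeeping is also internally inconsistent: a full power of $[W]_{A_2}$ at the first reduction plus a half power from each of the two embeddings gives $[W]_{A_2}^{2}$, not $[W]_{A_2}^{3/2}$.

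The paper's proof fixes exactly these two points. Parametrize the dual pairing as $V_1f_1=W^{-1}f_1$, $V_2f_2=Wf_2$ with $\|f_j\|_{L^2(V_j)}=1$, choose $\phi_{jQ}\in\Phi(Q)$ realizing the Minkowski product, and keep the linear averages $F_{jQ}=\tfrac1{|Q|}\int V_jf_j\phi_{jQ}$ intact. The $A_2$ algebra is spent only as a \emph{half} power, via
$F_{1Q}\cdot F_{2Q}\leq\bigl\|\l V_1\r_Q^{1/2}\l V_2\r_Q^{1/2}\bigr\|\,\bigl|\l V_1\r_Q^{-1/2}F_{1Q}\bigr|\,\bigl|\l V_2\r_Q^{-1/2}F_{2Q}\bigr|\leq[W]_{A_2}^{1/2}\bigl|\l V_1\r_Q^{-1/2}F_{1Q}\bigr|\,\bigl|\l V_2\r_Q^{-1/2}F_{2Q}\bigr|$;
then Cauchy--Schwarz over $Q\in\mathcal S$ and, for each $j$, the matrix Carleson embedding with matrices $A_{jQ}=|Q|\l V_j\r_Q^{-1}$. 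Note that the testing/packing constant of these matrices is \emph{absolute}, since $\tfrac1{|R|}\sum_{Q\subset R,\,Q\in\mathcal S}\|\l V_j\r_Q^{1/2}A_{jQ}\l V_j\r_Q^{1/2}\|=\tfrac1{|R|}\sum_{Q\subset R,\,Q\in\mathcal S}|Q|\leq C$ by sparseness — not $\lesssim[W]_{A_2}$ as you wrote; the characteristic enters only through the linear (IKP) bound in the embedding, yielding $\sum_Q|Q|\,|\l V_j\r_Q^{-1/2}F_{jQ}|^2\lesssim[V_j]_{A_2}$, with the observation that the proof tolerates general $\phi_{jQ}\in\Phi(Q)$ in place of $\cic{1}_Q$. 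The powers then add to $\tfrac12+\tfrac12+\tfrac12=\tfrac32$ as claimed.
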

\begin{remark}
\label{sharpA2} { Previous partial results on sharp dependence of the $L^2$ weighted operator norms of $T $ on the matrix $A_2$ characteristic, as well as  related work on matrix two-weight inequalities, can be found in \cite{BCTW,BPW,BW0} and references therein. The $3/2$ power in our estimate is currently the best known, but,   unlike the scalar case, we have no indication of it being sharp.  The previously mentioned domination theorem \eqref{NTVdom} announced by Nazarov, Petermichl, Treil and Volberg \cite{NTVc} yields the same $3/2$ power; see \cite{NPTV} for details.} \end{remark}

After a preliminary convex set-valued Calder\'on-Zygmund lemma in the upcoming Subsection \ref{ss31}, we detail the proof of Theorem \ref{domshiftvec} in Subsection \ref{ss32} and the derivation of the weighted Corollary \ref{corweight} in the concluding Subsection \ref{ss33}.
\subsection{A convex set-valued Calder\'on-Zygmund lemma} \label{ss31} Before the actual proof of Theorem \ref{domshiftvec}, we need an analogue of the Calder\'on-Zygmund lemma based on the convex sets $\l f \r_Q$.
   \begin{lemma} \label{vectorstop}
Let $A>n^2$, $f\in L^1_{\mathrm{loc}}(\R^d;\R^n)$ and $Q\subset \R^d$ be a cube.  Then the collection $\mathcal I_{Q,f}$ of the maximal dyadic subcubes of $Q$ with
\[
\l f\r_I \not\subset A \l f\r_Q
\]
has the following properties:
\begin{align} \label{convuno}
& \l f\r_I \subset2^d A \l f\r_Q,\\ \label{convdue}
&\displaystyle \sum_{I \in \mathcal I_{Q,f}} |I|< \frac{n^2}{A} |Q|. \end{align}
\end{lemma}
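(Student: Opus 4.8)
The plan is to mimic the classical Calderón–Zygmund stopping-time argument, with the scalar inequality $\l f\r_I > \lambda$ replaced by the containment failure $\l f\r_I \not\subset A\l f\r_Q$, and to handle the discrepancy between ``a convex set is not contained in $A\l f\r_Q$'' and ``some coordinate average is large'' by passing through the support functions of the convex bodies. First I would fix a convenient description of containment: since $\l f\r_Q$ is closed, convex and symmetric, $\l f\r_I\subset A\l f\r_Q$ holds if and only if the support function $h_{\l f\r_I}(u)\le A\,h_{\l f\r_Q}(u)$ for every unit vector $u$, where $h_K(u)=\sup_{v\in K}\langle v,u\rangle$. Unwinding the definition \eqref{convexbody}, $h_{\l f\r_Q}(u)=\l |\langle f,u\rangle|\r_Q$, the average over $Q$ of the scalar function $|\langle f,u\rangle|$. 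Thus $I\in\mathcal I_{Q,f}$ precisely when there is a unit vector $u$ (depending on $I$) with $\l|\langle f,u\rangle|\r_I > A\,\l|\langle f,u\rangle|\r_Q$.

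The upper bound \eqref{convuno} is then the easy half: if $I\in\mathcal I_{Q,f}$, let $\hat I$ be the dyadic parent of $I$; by maximality $\l f\r_{\hat I}\subset A\l f\r_Q$, i.e. $\l|\langle f,u\rangle|\r_{\hat I}\le A\,\l|\langle f,u\rangle|\r_Q$ for all unit $u$. Since $|\hat I|=2^d|I|$, for every unit $u$ we get $\l|\langle f,u\rangle|\r_I\le 2^d\l|\langle f,u\rangle|\r_{\hat I}\le 2^d A\,\l|\langle f,u\rangle|\r_Q$, which is exactly $h_{\l f\r_I}(u)\le 2^d A\,h_{\l f\r_Q}(u)$ for all $u$, hence $\l f\r_I\subset 2^d A\l f\r_Q$. (One must check that $I$ is large enough to have a parent inside $Q$; if $I=Q$ this cannot happen for $A>n^2\ge 1$, since $\l f\r_Q\subset A\l f\r_Q$ trivially, so every $I\in\mathcal I_{Q,f}$ is a proper subcube.)

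For the packing bound \eqref{convdue}, the natural route is to reduce to finitely many coordinate directions and apply a scalar maximal-function estimate. For each $I\in\mathcal I_{Q,f}$ choose a unit witness $u_I$; the obstacle is that $u_I$ ranges over the whole sphere. The standard fix is to replace $u_I$ by a nearby coordinate axis: write $u_I=(u_I^{(1)},\dots,u_I^{(n)})$, so that $\max_k|u_I^{(k)}|\ge n^{-1/2}$, and pick $k=k(I)$ achieving this maximum. Then $|\langle f,u_I\rangle|\le\sum_k|u_I^{(k)}||f_k|\le\sqrt n\,\max_k|f_k|$, while restricting attention to the sign-adjusted $k(I)$-th coordinate and using $|u_I^{(k(I))}|\ge n^{-1/2}$ gives, on each $I$, the bound $\l|\langle f,u_I\rangle|\r_I\le \sqrt n\,\l|f_{k(I)}|\r_I$; combined with $\l|\langle f,u_I\rangle|\r_Q\ge n^{-1/2}\l|f_{k(I)}|\r_Q$ only in a direction-dependent way this needs care. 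The cleaner bookkeeping: group the $I$'s by $k=k(I)\in\{1,\dots,n\}$; for each such $I$ one has $A\,h_{\l f\r_Q}(u_I) < h_{\l f\r_I}(u_I)\le \sqrt n\,\l |f_k|\r_I$, and also $h_{\l f\r_Q}(u_I)\ge |u_I^{(k)}|\,\l|f_k|\r_Q\ge n^{-1/2}\l|f_k|\r_Q$, so $\l|f_k|\r_I > (A/n)\,\l|f_k|\r_Q$. The cubes in group $k$ are disjoint (being disjoint subcubes of $Q$, maximal among those violating an increasing condition), so by the elementary packing estimate for the dyadic maximal function — the same one behind \eqref{sparsepack} — $\sum_{I:\,k(I)=k}|I|\le \frac{n}{A}\,\frac{\l|f_k|\r_Q}{\l|f_k|\r_Q}|Q|\cdot$, i.e. $\sum_{I:\,k(I)=k}|I|\le (n/A)|Q|$ (using $\frac{1}{|Q|}\int_Q|f_k|=\l|f_k|\r_Q$ and disjointness: $\l|f_k|\r_Q|Q|=\int_Q|f_k|\ge\sum_I\int_I|f_k|>\frac{A}{n}\l|f_k|\r_Q\sum_I|I|$). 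Summing over the $n$ groups yields $\sum_{I\in\mathcal I_{Q,f}}|I|< \frac{n^2}{A}|Q|$, which is \eqref{convdue}.

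The main obstacle is the bookkeeping in the previous paragraph: one must be honest about the fact that the support function of $\l f\r_Q$ in a direction $u$ is the average of $|\langle f,u\rangle|$ — not the norm of the vector average — and then carefully pass from an arbitrary witness direction $u_I$ to a single coordinate $f_{k(I)}$ so that a genuinely scalar, one-function maximal-function packing estimate applies. Everything else is routine: \eqref{convuno} is a one-line parent estimate, and the reduction of containment to support-function inequalities is standard convex geometry that only uses that the bodies are closed, convex and symmetric, which was already observed after \eqref{convexbody}. I would also remark that the hypothesis $A>n^2$ is exactly what makes the right-hand side of \eqref{convdue} less than $|Q|$, so that the stopping cubes genuinely form a sparse-type family, matching the role played by the constant $2^8$ in the scalar argument of Section \ref{spf}.
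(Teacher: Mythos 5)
Your proof of \eqref{convuno} via support functions and the parent cube is correct and matches the paper's argument. However, the coordinate reduction you use for \eqref{convdue} contains two false inequalities, and the argument does not go through. You claim $h_{\l f\r_I}(u_I)\le \sqrt n\,\l |f_{k}|\r_I$ and $h_{\l f\r_Q}(u_I)\ge |u_I^{(k)}|\,\l|f_{k}|\r_Q$. Both fail in general. For the first: $h_{\l f\r_I}(u_I)=\l|\langle f,u_I\rangle|\r_I\le\sum_m|u_I^{(m)}|\l|f_m|\r_I$, and there is no reason the single term indexed by $k$ dominates; e.g.\ take $n=2$, $f=(0,g)$, $u_I$ close to $e_1$ — then $k(I)=1$ but $h_{\l f\r_I}(u_I)>0=\l|f_1|\r_I$. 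For the second: the quantity $h_{\l f\r_Q}(u_I)=\frac{1}{|Q|}\int_Q\big|\sum_m u_I^{(m)}f_m\big|$ can be made arbitrarily small compared to $|u_I^{(k)}|\l|f_k|\r_Q$ by cancellation among the coordinates; e.g.\ $f_1=f_2=g$ and $u_I=(1,-1)/\sqrt2$ gives $h_{\l f\r_Q}(u_I)=0$ while $|u_I^{(1)}|\l|f_1|\r_Q>0$. So the deduced scalar stopping inequality $\l|f_k|\r_I>(A/n)\l|f_k|\r_Q$ is unjustified, and the packing bound collapses.

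The idea you are missing is the John-ellipsoid normalization. Before doing any coordinate-wise argument, the paper uses the $\mathrm{GL}_n(\R)$-invariance of the collection $\mathcal I_{Q,f}$ to reduce to the case where the John ellipsoid of $\l f\r_Q$ is the unit ball $B$, so that $B\subset\l f\r_Q\subset\sqrt n\, B$. This is precisely what makes coordinates comparable: failure of containment in $A\l f\r_Q$ then implies failure of containment in $AB$, which produces a witness \emph{point} $F_I\in\l f\r_I$ with some coordinate $(F_I)_j>A/\sqrt n$ (not a witness \emph{direction}), and the corresponding test functions $\varphi_I$ can be summed across disjoint $I$'s of the same type $j$ into a single $\varphi_Q\in\Phi(Q)$, whose output $F_Q\in\l f\r_Q\subset\sqrt n\,B$ has $(F_Q)_j\le\sqrt n$. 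Comparing with $(F_Q)_j=\sum_I\frac{|I|}{|Q|}(F_I)_j$ gives the packing. Without first making $\l f\r_Q$ round, the convex body may be extremely eccentric, and no fixed coordinate system will let you run a one-dimensional Calder\'on–Zygmund packing argument; your proof would need this normalization inserted before the coordinate reduction to be repaired.
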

In the proof, we will use the notion of \textit{John ellipsoid}. If $K\subset \R^n$ is a closed convex symmetric set then $\mathcal E_K$, the John ellipsoid, is the     solid ellipsoid of largest volume contained in $K$. This is a closed set with the  property that
\[
\mathcal E_K \subset  K \subset \sqrt{n}\mathcal E_K
\]
where as usual the above denotes concentric dilation. 
\begin{proof}[Proof of Lemma \ref{vectorstop}] We first prove \eqref{convuno} which is rather immediate. It is easy to see that if $\widetilde I $ is the dyadic parent of $I$ then $\l f \r_I\subset 2^d \l f \r_{\widetilde I} $ and the latter set is contained in $2^{d}A \l f\r_Q$ by maximality of $I$.

We come to the proof of \eqref{convdue}. Here we notice that the collection $\mathcal I_{Q,f}$ is invariant under action of $\mathrm{GL}_n(\R)$. For this reason, there is no loss in generality with assuming that the John ellipsoid of $\l f\r_Q $ is the closed unit ball $B$. 
We say that $I\in \mathcal I_{Q,f}$ is of type $j$, $j=1,\ldots, n$ if there exists $F_I\in \l f \r_I $ with $\sqrt{n}(F_{I})_j>A$: here and below $(F_{I})_j$ is the $j$-th coordinate.
Since $\l f \r_I \not \subset A \l f \r_Q $, and \emph{a fortiori} $\l f \r_I \not \subset AB $, it  follows that each $I$ is of type $j$ for at least one $j=1,\ldots,n$. Let $\mathcal I_j$ be those $I\in \mathcal I_{Q,f}$ of type $j$. We will prove that  
\begin{equation} \label{convpf1}
\sum_{I \in \mathcal I_j} |I| < \frac{  n }{A} |Q|
\end{equation}
which in light of the previous observations yields \eqref{convdue}.
We may  find $\varphi_I\in \Phi(I)$   such that 
\[
F_I=\frac{1}{|I|}\int f \varphi_I \, \d x.
\] 
Define now
\[
F_Q:= \sum_{I \in \mathcal I_j} \frac{|I|}{|Q|} F_I   = \frac{1}{|Q|} \int f  \varphi_Q, \qquad \varphi_Q:=\sum_{I \in \mathcal I_j} \varphi_I.
\] 
Since $I$ are pairwise disjoint and contained in $Q$, $\varphi_Q\in \Phi(Q)$.   This means that $F_Q\in \l f \r_Q\subset \sqrt{n} B$. In particular $(F_Q)_j
\leq \sqrt{n}$. But then, applying the   type $j$ condition in the last step
\[
\sqrt{n}\geq  (F_Q)_j =  \sum_{I \in \mathcal I_j} \frac{|I|}{|Q|} (F_I)_j > \frac{A}{\sqrt{n}}
\sum_{I \in \mathcal I_j}\frac{|I|}{|Q|} 
\]
which is \eqref{convpf1}. The proof is thus complete.
\end{proof}

\subsection{Proof of Theorem \ref{domshiftvec}} \label{ss32}
The proof of the vector-valued version follows the  exact same outline of the proof of Theorem \ref{domshift}. We just detail the
 main iterative step, which is carried out through a vector-valued version of Lemma \ref{mainiter}. 
 \begin{lemma} \label{mainitervec} Let $f_1,f_2\in  L^1_{\mathrm{loc}}(\R^d;\R^n)$ and  $Q\in \mathcal D$.  Let
 $\mathcal I_Q$ be the collection of  maximal elements of $\mathcal I_{Q,f_1}\cup \mathcal I_{Q,f_2}. $ Then, for all dyadic shifts  $\mathbb S^\varrho$ constructed on $\mathcal D$
\[
|\mathbb S^\varrho \otimes \mathrm{Id}_{\R^n} (f_1\cic{1}_Q,f_2\cic{1}_Q)| \leq C \varrho |Q| \l f_1 \r_Q \l f_2 \r_Q + \sum_{I \in \mathcal I_Q} \big|\mathbb S^\varrho_{\mathcal D(I)} \otimes \mathrm{Id}_{\R^n}(f_1\cic{1}_I ,f_2\cic{1}_I )\big| 
\]
where $C$ is a positive absolute constant depending on the dimensions $n,d$ only.
\end{lemma}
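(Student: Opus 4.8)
The plan is to mirror the scalar proof of Lemma \ref{mainiter}, replacing the Calder\'on-Zygmund decomposition based on scalar averages with one based on the convex sets $\l f_j \r_Q$, and to control all three types of error terms by the product interval $\l f_1\r_Q\l f_2\r_Q$ using the bounds supplied by Lemma \ref{vectorstop}. First I would set, exactly as before, $E:=\bigcup_{I\in\mathcal I_Q}I$ and $\mathcal G:=\{R\in\mathcal D:R\not\subset E\}$, split off the local subshifts $\mathbb S^\varrho_{\mathcal D(I)}\otimes\mathrm{Id}$, decompose $\mathcal G$ into $\varrho$ lacunary subcollections $\mathcal G'$ (sidelengths $2^{\varrho n+m}$), and reduce to proving $|\mathbb S^\varrho_{\mathcal G'}\otimes\mathrm{Id}(f_1\cic1_Q,f_2\cic1_Q)|\leq C|Q|\l f_1\r_Q\l f_2\r_Q$. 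Then, componentwise in $j=1,\dots,n$, I would write $f_{i,j}\cic1_Q=g_{i,j}+b_{i,j}$ with $g_{i,j}=f_{i,j}\cic1_{E^c}+\sum_{I}(f_{i,j})_I\cic1_I$ and $b_{i,j}=\sum_I (f_{i,j}-(f_{i,j})_I)\cic1_I$; assembling the coordinates this is a vector-valued CZ decomposition $f_i\cic1_Q=g_i+b_i$ with $\l g_i\r_Q\subset C\l f_i\r_Q$ (each $I$-average lies in $2^dA\l f_i\r_Q$ by \eqref{convuno}, and averaging a $\Phi(Q)$-test function against the truncated pieces stays inside a bounded dilate of $\l f_i\r_Q$ by convexity), and the analogue of \eqref{CZ3}, namely $\|b_{iI}\|_{L^1(\R^d;\R^n)}\leq C|I|\l|f_i|\r_Q$, which by the estimate $\sup_{v\in\l f\r_Q}|v|\leq\l|f|\r_Q$ is compatible with replacing scalar averages by endpoints of the convex sets.

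The heart of the argument is then the four-term split as in \eqref{Gprimeest2}. For the $\mathbb S^\varrho_{\mathcal G'}\otimes\mathrm{Id}(g_1,g_2)$ term I would use the Cauchy--Schwarz-type bound $|\mathbb S^\varrho_{\mathcal G'}\otimes\mathrm{Id}(g_1,g_2)|=|\sum_j \mathbb S^\varrho_{\mathcal G'}(g_{1,j},g_{2,j})|\leq\sum_j\|g_{1,j}\|_2\|g_{2,j}\|_2\leq\|\,|g_1|\,\|_2\|\,|g_2|\,\|_2$ via A2 and the discrete Cauchy--Schwarz over $j$, then the pointwise bound $|g_i(x)|\leq C\l|f_i|\r_Q$ (valid since $\sup_{v\in\l g_i\r_Q}|v|\leq\l|g_i|\r_Q$ and $\l g_i\r_Q\subset C\l f_i\r_Q$, hence $|g_i|\lesssim\l|f_i|\r_Q$ pointwise off the stopping cubes, and a similar bound on the $I$'s) to get $C|Q|\l|f_1|\r_Q\l|f_2|\r_Q$; since the right-hand side of \eqref{unifestvec} is the product interval $\l f_1\r_Q\l f_2\r_Q$ interpreted as its right endpoint, and that endpoint dominates $\l|f_1|\r_Q\l|f_2|\r_Q$ up to the dimensional constant coming from John's theorem, this closes. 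The mixed terms $\mathbb S^\varrho_{\mathcal G'}\otimes\mathrm{Id}(g_1,b_2)$ and its symmetric partner I would handle coordinate-by-coordinate exactly as in the scalar case: by A3, $S_R(g_{1,j},b_{2,j,I})=0$ unless $R=R(I)$ (zero average of $b_{2,j,I}$ against a function constant on $I$), and then $|S_{R(I)}(g_{1,j},b_{2,j,I})|\leq\|g_{1,j}\|_\infty\|b_{2,j,I}\|_1$; summing in $j$ with Cauchy--Schwarz gives $\|\,|g_1|\,\|_\infty\|b_{2I}\|_{L^1(\R^d;\R^n)}\leq C|I|\l|f_1|\r_Q\l|f_2|\r_Q$, and summing over $I$ uses $\sum_I|I|\leq|Q|$ to give the desired $C|Q|\l f_1\r_Q\l f_2\r_Q$. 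The $\mathbb S^\varrho_{\mathcal G'}\otimes\mathrm{Id}(b_1,b_2)$ term is treated precisely as in \eqref{Gprimeest5}: $S_R(b_{1I_1},b_{2I_2})=0$ unless $R=R(I_1)=R(I_2)$, one bounds $|S_R(b_{1,j,I_1},b_{2,j,I_2})|\leq|R|^{-1}\|b_{1,j,I_1}\|_1\|b_{2,j,I_2}\|_1$, sums in $j$ by Cauchy--Schwarz to pass to $|\cdot|$-norms, uses the packing bound $\sum_{I:R(I)=R}\|b_{iI}\|_{L^1(\R^d;\R^n)}\leq C|R|\l|f_i|\r_Q$ (the vector-valued analogue of \eqref{theabove}, again since $R\not\subset E$), and finally the disjointness of $\{I_2:R(I_2)=R\}$ across $R\in\mathcal G'$.

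The step I expect to require the most care is the bookkeeping between the three ``sizes'' at play: the convex set $\l f_i\r_Q$, its John ellipsoid, and the scalar average $\l|f_i|\r_Q$. Concretely, one must check that every scalar estimate of the form ``$\cdots\leq C|Q|\l f_1\r_Q\l f_2\r_Q$'' produced coordinatewise, once summed in $j$ via Cauchy--Schwarz, really is controlled by the product interval $\l f_1\r_Q\l f_2\r_Q$ appearing on the right of \eqref{unifestvec} — this is where the chain $\sup_{v\in\l f\r_Q}|v|\leq\l|f|\r_Q$ together with $\l f\r_Q\supset\mathcal E_{\l f\r_Q}$ and $\l|f|\r_Q$-vs-endpoint comparisons must be invoked in the right direction, and where the dimensional constant $C=C(n,d)$ is generated (through both John's theorem and the $\sqrt n$ in Lemma \ref{vectorstop}). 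A second, more routine, point is verifying that $\l g_i\r_Q$ is contained in a fixed dilate of $\l f_i\r_Q$: this follows by writing any $\frac1{|Q|}\int g_i\varphi$, $\varphi\in\Phi(Q)$, as a convex-like combination of $\frac1{|Q|}\int_{E^c}f_i\varphi$ and the vectors $\frac{|I|}{|Q|}(f_i)_I$, each of which lies in a bounded dilate of $\l f_i\r_Q$ by \eqref{convuno} and the symmetry/convexity of these sets. Once these compatibility checks are in place, the rest of the proof is the verbatim transcription of Section \ref{spf} with $\mathbb S^\varrho$ replaced by $\mathbb S^\varrho\otimes\mathrm{Id}_{\R^n}$ and scalar averages by convex bodies, and the iteration of Lemma \ref{mainitervec} over the sparse tree $\mathcal S_\D$ (now built from $\mathcal I_{Q,f_1}\cup\mathcal I_{Q,f_2}$) yields \eqref{unifestvec}.
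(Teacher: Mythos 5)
Your overall scaffold (the sets $E$, $\mathcal G$, the lacunary split into $\mathcal G'$, a convex-set-valued Calder\'on--Zygmund decomposition, and the four-term split) matches the paper, and the componentwise $L^2$/$L^1$ bounds on $g_i, b_{iI}$ you derive are correct. The gap is in the very last comparison step, which you yourself flag as the delicate one. You want to conclude from bounds of the form $C|Q|\l|f_1|\r_Q\l|f_2|\r_Q$ (obtained after Cauchy--Schwarz over coordinates) to the target $C|Q|\l f_1\r_Q\l f_2\r_Q$, and you assert that John's theorem gives a dimensional-constant comparison in the required direction. This is false. The inequality $\sup_{v\in\l f\r_Q}|v|\leq\l|f|\r_Q$ only goes one way, and there is no reverse inequality $\l|f_1|\r_Q\l|f_2|\r_Q\leq C(n)\,\l f_1\r_Q\l f_2\r_Q$: take $n=2$, $f_1\equiv e_1$, $f_2\equiv e_2$ on $Q$. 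Then $\l|f_1|\r_Q=\l|f_2|\r_Q=1$, while $\l f_1\r_Q=[-1,1]\times\{0\}$, $\l f_2\r_Q=\{0\}\times[-1,1]$, and the Minkowski product endpoint $\l f_1\r_Q\l f_2\r_Q=\sup\{|k\cdot h|\}=0$. So the bound you produce for the $g$--$g$, $g$--$b$, and $b$--$b$ terms is not dominated by the right-hand side of the lemma; the argument as written does not close.

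What the paper does instead, and what you are missing, is a normalization step: choose $A_1,A_2\in\mathrm{GL}_n(\R)$ with $A_j\tilde f_j=f_j$ so that the John ellipsoid of $\l\tilde f_j\r_Q$ is the unit ball $B$. One then proves the \emph{absolute} coordinate estimate $|\mathbb S^\varrho_{\mathcal G}(\tilde f_{1,k_1}\cic{1}_Q,\tilde f_{2,k_2}\cic{1}_Q)|\leq C\varrho|Q|$, which your CZ machinery does yield because after normalization $\tilde g_j(x)\in CB$ and $\l\tilde b_{jI}\r_I\subset CB$ give scalar bounds by constants rather than by $\l|f_j|\r_Q$. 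The crucial final move is then to unwind: $\mathbb S^\varrho_{\mathcal G}\otimes\mathrm{Id}(f_1,f_2)=\sum_{j,k_1,k_2}A_1^{jk_1}A_2^{jk_2}\mathbb S^\varrho_{\mathcal G}(\tilde f_{1,k_1},\tilde f_{2,k_2})$, and since $e_{k_j}\in B\subset\l\tilde f_j\r_Q$ we have $A_je_{k_j}\in\l f_j\r_Q$, hence $|(A_1e_{k_1})\cdot(A_2e_{k_2})|\leq\l f_1\r_Q\l f_2\r_Q$. This is exactly where the product-interval endpoint appears naturally, without ever needing to compare $\l|f|\r_Q$ against $\l f\r_Q$. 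Without the $GL_n$ normalization, a coordinatewise Cauchy--Schwarz argument inevitably produces $\l|f_1|\r_Q\l|f_2|\r_Q$ and loses the cancellation encoded in the geometry of the convex bodies.
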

We clarify that, in the context of functions $f_1,f_2\in  L^1_{\mathrm{loc}}(\R^d;\R^n)$,   the collection $\mathcal I_{Q,f_j}$ refers to the one defined in Lemma \ref{vectorstop} for the value $A=2^{8}n^2$. Thus $\mathcal I_Q$ defined in Lemma \ref{mainitervec} above satisfy \eqref{sparsepack} just like in the scalar case. 
\begin{proof}[Proof of Lemma \ref{mainitervec}]  In this proof, the constant $C$ is meant to depend on $n,d$ only and may vary between instances.

We limit ourselves to indicating the necessary changes from the argument for Lemma \ref{mainiter}. 
We start by setting
\[
  E:= \bigcup_{I \in \mathcal I_Q} I, \qquad  \mathcal G:=\left\{R\in \mathcal{D}: R\not\subset E\right\}. \]
  Then similarly to  the scalar case,
\[ \mathbb S^\varrho\otimes \mathrm{Id}_{\R^n}(f_1\cic{1}_Q,f_2\cic{1}_Q)= \mathbb S_{\mathcal G}^\varrho\otimes \mathrm{Id}_{\R^n}(f_1\cic{1}_Q,f_2\cic{1}_Q)+
\sum_{I \in \mathcal I_Q}\mathbb S^\varrho_{\mathcal D(I)}\otimes \mathrm{Id}_{\R^n}(f_1\cic{1}_I ,f_2\cic{1}_I ). 
\] It thus suffices to prove that
\begin{equation}\label{goodpart}
\left|\mathbb S_{\mathcal G}^\varrho\otimes \mathrm{Id}_{\R^n}(f_1\cic{1}_Q,f_2\cic{1}_Q)\right| \leq C\varrho |Q|\l f_1 \r_Q \l f_2\r_Q.\end{equation}
To prove \eqref{goodpart}, it 
is useful to transform the John ellipsoids of $\l f_1 \r_Q, \l f_2 \r_Q $ to the closed unit ball $B$, which can be achieved via actions of $\mathrm{GL}_{n}(\R)$. For fixed $f_1,f_2$, there exists matrices $A_1,A_2 \in\mathrm{GL}_{n}(\R)$ such that for $j=1,2$, $A_j \tilde{f}_j=f_j$ and the John ellipsoid of $\l \tilde{f}_j \r_Q$ is $B$.  We claim that 
\begin{equation}\label{coordwise}
\left|\mathbb S_{\mathcal G}^\varrho(\tilde{f}_{1,k_1}\cic{1}_Q,\tilde{f}_{2,k_2}\cic{1}_Q)\right|\leq C\varrho |Q|,\qquad \forall k_1,k_2=1,\ldots,n.
\end{equation}
Assuming (\ref{coordwise}), let us first explain how it implies (\ref{goodpart}) and thus the result of the lemma. A simple calculation shows that
\[
\begin{split}
&\quad \left|\mathbb S_{\mathcal G}^\varrho\otimes \mathrm{Id}_{\R^n}(f_1\cic{1}_Q,f_2\cic{1}_Q)\right|
=\left|\mathbb S_{\mathcal G}^\varrho\otimes \mathrm{Id}_{\R^n}\left((A_1\tilde{f}_1)\cic{1}_Q,(A_2\tilde{f}_2)\cic{1}_Q\right)\right| \\
&=\left|\sum_{j,k_1,k_2=1}^nA_1^{jk_1}A_2^{jk_2}\mathbb{S}_{\mathcal{G}}^\varrho (\tilde{f}_{1,k_1}\cic{1}_Q,\tilde{f}_{2,k_2}\cic{1}_Q)\right|\leq C\varrho|Q|\sup_{1\leq k_1,k_2\leq n} \left| \sum_{j=1}^nA_1^{jk_1}A_2^{jk_2} \right|.
\end{split}
\]  We have used  (\ref{coordwise}) in the last  step. Then \eqref{goodpart} will follow if we show that
\begin{equation} \label{Johnkl}
 \left| \sum_{j=1}^nA_1^{jk_1}A_2^{jk_2} \right|
=\left|(A_1 {e}_{k_1})(A_2 {e}_{k_2})\right| \leq \l f_1\r_Q\l f_2\r_Q \qquad \forall 1\leq k_1,k_2\leq n
\end{equation}
where ${e}_k$ is the $k$-th coordinate vector.
Fix $1\leq k_1,k_2\leq n$. By virtue of  $ {e}_{k_j}\in B\subset  \l \tilde{f}_j\r_Q$,   there exists $\varphi_j\in\Phi(Q)$   such that 
\[ {e}_{k_j}= \frac{1}{|Q|}\int \tilde{f}_j\varphi_j, \qquad j=1,2.\]
 Therefore,
\[
A_j {e}_{k_j}=\frac{1}{|Q|}\int (A_j\tilde{f}_j)\varphi_j=\frac{1}{|Q|}\int f_j\varphi_j\in  \l f_j\r_Q,
\] which implies immediately  
the claimed \eqref{Johnkl} and hence (\ref{goodpart}).

Now we turn to the proof of (\ref{coordwise}). We operate the same Calder\'on-Zygmund decomposition of $\tilde{f}_{1,k_j}\cic{1}_Q$  $j=1,2$ as \eqref{CZdec}, this time with respect to the cubes $\mathcal I_Q$ defined in this context. 
The analogues of \eqref{CZ1} and \eqref{CZ3} are, for $j=1,2$
\begin{align}
\label{CZ1v}&  \tilde{g}_j(x) \in    C \l \tilde{f}_j\r_{Q}\subset CB \qquad \forall x \in Q, \\
\label{CZ3v}& \l \tilde{b}_{jI}\r_{I} \subset   C  \l \tilde{f}_j\r_{Q} \subset C  B\qquad  \forall I \in \mathcal I_Q.
\end{align}
An immediate consequence of \eqref{CZ1v}, \eqref{CZ3v} is that  each coordinate  of $\tilde{g}_j,\tilde{b}_j$ satisfies   scalar estimates analogous to \eqref{CZ2} and \eqref{CZ3}: for $j=1,2$ and $1\leq k_j\leq n$
\[
\|\tilde{g}_{j,k_j}\|_2\leq C|Q|^{\frac12}, \quad \|\tilde{b}_{jI,k_j}\|_1\leq C|I|.
\]
By virtue of these estimates and of the fact that the average of each coordinate of $\tilde{b}_{jI}$ vanishes on $I$, the estimate (\ref{coordwise}) follows by repeating the proof of  \eqref{Gprimeest2} from the scalar case.
\end{proof}

\subsection{Proof of Corollary \ref{corweight}} \label{ss33}
Let $T\in \bar{ \mathcal{U}}$ and $W$ be a $n \times n $  matrix $A_2$ weight on $\R^d$. For convenience we write $V_1=W^{-1}, V_2=W$. Since
\[
\| T\otimes \mathrm{Id}_{\R^n}\|_{L^2(W)\rightarrow L^2(W) } =\sup\left\{ \big|\l T\otimes \mathrm{Id}_{\R^n}(V_1f_1),V_2 f_2\r\big|: \|f_1\|_{L^2(V_1)}=\|f_2\|_{L^2(V_2)}=1\right\}
\]
by virtue of the domination Theorem \ref{mainthmvec} it suffices to show that whenever $\mathcal S$ is a sparse collection and $\|f_1\|_{L^2(V_1)}=\|f_2\|_{L^2(V_2)}=1$ there holds
\begin{equation}\label{weightpf1}
\sum_{Q\in \mathcal S} |Q| \l V_1f_1  \r_Q \l  V_2f_2 \r_Q \leq C [W]_{A_2}^{\frac32}.
\end{equation} 
Fix such a collection $\mathcal S$ and $f_1,f_2$. By definition of $\l\cdot \r_Q$, for each $Q$ we may find $\phi_{jQ}\in \Phi(Q) $ such that
\begin{equation} \label{weightpf4}
\l V_1f_1  \r_Q \l  V_2f_2 \r_Q = F_{1Q} F_{2Q} , \qquad F_{jQ}:=\frac{1}{|Q|}\int V_j f_j \phi_{jQ} \, \d x, \quad j=1,2.
\end{equation}
A similar reduction to the one carried out in \cite[Proof of Theorem 1.4]{BW}  then yields that
\begin{equation}\label{weightpf2}
\sum_{Q\in \mathcal S} |Q| \l V_1f_1  \r_Q \l  V_2f_2 \r_Q \leq [W]_{A_2}^{\frac12} \prod_{j=1,2}
 \left( \sum_{Q \in \mathcal S} |Q| \left| \l V_j\r^{-\frac{1}{2}}_QF_{jQ}  \right|^2 \right)^\frac{1}{2}
\end{equation}
which \emph{de facto} reduces \eqref{weightpf1} to proving that, for $j=1,2$ 
\begin{align}\label{weightpf3}
\sum_{Q \in \mathcal S} |Q| \left| \l V_j\r^{-\frac{1}{2}}_QF_{jQ}  \right|^2  = \sum_{Q}   (A_{jQ}F_{jQ}) F_{jQ} \leq C[V_j]_{A_2},
\end{align}
where we set 
\[ A_{jQ}=\begin{cases} 
      0 & Q\notin \mathcal S \\
      |Q|\l V_j\r^{-1}_Q & Q\in \mathcal S
   \end{cases}
\] and recall that $[V_j]_{A_2}=[W]_{A_2}$.
Note that the matrices $A_{jQ}$ satisfy the following packing condition: for any $R\in \mathcal S$,
\begin{align*}
\frac{1}{|R|}\sum_{Q\subset R}\left\|\left<V_j\right>^\frac{1}{2}_{Q}A_{jQ}\left<V_j\right>^\frac{1}{2}_{Q}\right\|&=\frac{1}{|R|}\sum_{\substack{Q\subset R\\ Q\in\mathcal{S} }}|Q|\left\|\left<V_j\right>^\frac{1}{2}_{Q} \l V_j\r^{-1}_Q\left<V_j\right>^\frac{1}{2}_{Q}\right\|
=\frac{1}{|R|}\sum_{\substack{Q\subset R\\ Q\in\mathcal{S} }}|Q| \leq C
\end{align*}
by virtue of the fact that every sparse collection is Carleson: see \cite{LerNaz2015}.
Estimate \eqref{weightpf3} is then a consequence of the Carleson embedding theorem initially due to Treil and Volberg \cite{TV}, in the form recalled in \cite[Theorem 1.3]{BW}. The linear behavior of the  constant in \cite[Theorem 1.3]{BW} has been first obtained by Isralowitz, Kwon and Pott in \cite[Theorem 1.3]{IKP}. 
We remark that, while in \cite[Theorem 1.3]{IKP} $F_{jQ}$ defined in \eqref{weightpf4}  corresponds to the precise choice $\phi_{jQ}=\cic{1}_Q$, the proof works just as well for any choice $\phi_{jQ} \in \Phi(Q)$. This completes the proof of \eqref{weightpf3} and in turn, of Corollary \ref{corweight}.
 \bibliography{CuliucDiPlinioOuMRL}{}
\bibliographystyle{amsplain}
\end{document}